\def\keyword{\vspace{.5em}{\textbf{Keywords}.\,\relax}}
\definecolor{gray}{gray}{0.6}
\newtheorem{theorem}{Theorem}
\newtheorem{lemma}[theorem]{Lemma}
\theoremstyle{remark}
\newtheorem*{remark*}{Remark}
\newcommand{\vertiii}[1]{{\left\vert\kern-0.25ex\left\vert\kern-0.25ex\left\vert #1 
		\right\vert\kern-0.25ex\right\vert\kern-0.25ex\right\vert}}
\def\wta{\widetilde{a}}
\def\wtb{\widetilde{b}}
\def\wtd{\widetilde{d}}
\def\wtg{\widetilde{g}}
\def\wtm{\widetilde{m}}
\def\wtA{\widetilde{A}}
\def\wtB{\widetilde{B}}
\def\wtC{\widetilde{C}}
\def\wtD{\widetilde{D}}
\def\wtK{\widetilde{K}}
\def\wtM{\widetilde{M}}
\def\wtpi{\widetilde{\pi}}
\def\wttet{\widetilde{\theta}}
\def\bsf{\boldsymbol{f}}
\def\bsu{\boldsymbol{u}}
\def\bsx{\boldsymbol{x}}
\begin{document}

\title{Inverse properties of a class of seven-diagonal (near) Toeplitz matrices}
\author[1]{Bakytzhan Kurmanbek}
\author[2]{Yogi Erlangga}
\author[1, *]{Yerlan Amanbek}
\affil[1]{Nazarbayev University, Department of Mathematics, 53 Kabanbay Batyr Ave, Nur-Sultan 010000, Kazakhstan}
\affil[2]{Zayed University, Department of Mathematics, Abu Dhabi Campus, P.O. Box 144534, United Arab Emirates}
\affil[ ]{\textit{ bakytzhan.kurmanbek@nu.edu.kz, yogi.erlangga@zu.ac.ae, yerlan.amanbek@nu.edu.kz}}
\affil[*]{Corresponding author}
\date{\today}
\maketitle

\begin{abstract}
This paper discusses the explicit inverse of a class of seven-diagonal (near) Toeplitz matrices, which arises in the numerical solutions of nonlinear fourth-order differential equation with a finite difference method. A non-recurrence explicit inverse formula is derived using the Sherman-Morrison formula. Related to the fixed-point iteration used to solve the differential equation, we show the positivity of the inverse matrix and construct an upper bound for the norms of the inverse matrix, which can be used to predict the convergence of the method.
\end{abstract}

\keyword{seven-diagonal matrices, Toeplitz, exact inverse, upper bound of norm, nonlinear beam.}
\section{Introduction}

Many mathematical problems give rise to a system of equations that involves an inversion of a banded Toeplitz or near Toeplitz matrix. For example, a second-order or fourth-order finite difference approximation to a second-order differential operator results in a tridiagonal and, respectively, pentadiagonal Toeplitz matrix or a near Toeplitz matrix after the inclusion of boundary conditions. Inversions of this class of matrices have been studied extensively, and can be done very efficiently; see, e.g., ~\cite{Trench74, Mikkawy09R, Hadj08E,Kanal12BD,el2015new,tuanuasescu2019fast,el2008analytical}. In addition to the algorithmic development, many authors have contributed to the inverse properties of banded Toeplitz and near Toeplitz matrices, such as exact inverse formulas~\cite{hoskins1972some, dow2002explicit, Peluso01P,Lavis97S,Tan19}, bounds for entries of the inverse matrices, and bounds for the inverse norm. Examining formulas for determinant of such matrices can be also useful to explore the existence and uniqueness of solution related to the ordinary or partial differential problems \cite{ amanbek2020explicit,cinkir2012elementary,andjelic2020some,jia2019structure,kurmanbek2020proof}. 

An improved numerical accuracy can be attained via a higher-order approximation, but at the expense of increased bandwidth of the matrix in the system beyond five diagonals. This increased bandwidth not only increases the computational costs, but also complicates the analysis of the inverse properties. In many cases, the analysis demands for  additional conditions such as diagonal dominance or M-matrix~\cite{Meek83, Eijkhout88P, Lavis97S}. Exact inverse formulas, while can probably still be derived, may not be in an appealing form.

In this paper, we shall consider the inverse of $n \times n$ seven-diagonal near Toeplitz matrices associated with a fourth-order finite-difference discretization of the fourth-order differential operator $d^4/dx^4$:
\begin{equation} \label{form:An}
	A_n=
	\begin{pmatrix}
	a_0 & a_1 & 12 & -1 & 0 & \cdots & \cdots & \cdots  & 0 \\
	a_1 & 56 & -39 & 12 & -1 & \ddots & \ddots & \ddots  & \vdots \\
	12 & -39 & 56 & -39 & 12 & -1 & \ddots & \ddots  & \vdots \\
	-1 & 12 & -39 & 56 & -39 & 12 & -1 & \ddots  & \vdots \\
	0 & \ddots & \ddots & \ddots & \ddots & \ddots & \ddots & \ddots & 0\\
	\vdots & \ddots & \ddots &  \ddots &  \ddots & \ddots &  \ddots & \ddots & -1 \\
	\vdots & \ddots & \ddots & \ddots& \ddots& \ddots & \ddots  & -39  & 12 \\
	\vdots & \ddots & \ddots & \ddots & -1 & 12& -39 & 56 & a_1 \\
	0 & \cdots & \cdots & \cdots & 0 & -1 & 12 & a_1 & a_0 \\
	\end{pmatrix}
	_{n\times n}, \quad n \ge 7.
\end{equation}
The matrix~\eqref{form:An} is symmetric, centrosymmetric, nondiagonally dominant, and is not an M-matrix. The perturbation from the Toeplitz structure at the ''corner'' of the matrix can be caused by the inclusion of boundary conditions in the underlying boundary-value problems. 

An instance of application that involves~\eqref{form:An} is related to the nonlinear boundary-value problem
\begin{equation}
\displaystyle  EI \frac{d^4 u}{d x^4} = f(x, u), \quad x \in (0,1) \subset \mathbb{R} \label{eq:beam},
\end{equation}
with
\begin{equation}
  u(0) = u'(0) = u(1) = u'(1) = 0, \label{eq:beambc}
\end{equation}
Approximating the derivative by the fourth-order finite difference scheme results in the nonlinear system
\begin{equation} 
A_n \bsu = h^4 C_{EI}\bsf(\bsu),  \quad \bsu \in \mathbb{R}^n, \label{eq5}
\end{equation}
where $A_n$ is a near Toeplitz matrix of the form of~\eqref{form:An}, $h$ is the meshsize, and $C_{EI}$ is a physical constant.  Solution to the nonlinear system~\eqref{eq5} can be computed iteratively using a fixed-point method based on the iterands:
\begin{align}
  A_n \boldsymbol{u}^\ell = h^4 C_{EI} \boldsymbol{f}(\boldsymbol{u^{\ell-1}}), \quad \ell = 1,2,\dots,  \label{eq:fixedpoint}
\end{align}
for some initial solution vector $\bsu^0 \in \mathbb{R}^{n}$. For some class of the forcing term $f$, convergence of this method can be shown to depend on the $p$-norm of the inverse of $A_n$, $\|A^{-1}_n\|_p$, where $p \in \{1,2,\infty\}$.

In this paper, we derive an explicit, non-recurrence formula for the inverse of two special cases of the seven-diagonal matrix~\eqref{form:An}: (i) the Toeplitz case with with $a_0 = 56$ and $a_1 = -39$, and (ii) with $a_0 = 68$ and $a_1 = -40$, which corresponds to the boundary-value problem~\eqref{eq:beam} and~\eqref{eq:beambc}. The inverse formulas are then used to analyze some properties of the inverse matrices and to construct upper bounds for the norms of the inverses, in terms of the matrix size $n$ (which is linked to the meshsize $h$). While it is possible to construct a bound which is independent of $n$, an $n$-dependent bound is desirable as it can be used to  predict more accurately the convergence of the fixed-point method~\eqref{eq:fixedpoint} under mesh refinement.

In contrast with the tridiagonal and pentadiagonal cases, there does not exist a large body of results on the inverse of sevendiagonal (near) Toeplitz matrices. Literature on inverses of sevendiagonal matrices include~\cite{Jia10L, Lin14HJ, Lin16L} on algorithm development and~\cite{Huang09H} on inverse properties. While the class of matrices considered in this paper is quite narrow, our results are new, helpful in analyzing the convergence of the numerical recipe~\eqref{eq:fixedpoint}, and should contribute to the inverse theories of banded Toeplitz matrices.

The paper is organized as follows. After stating some preliminary results in Section~\ref{sec:prelim}, we derive the explicit formula for the Toeplitz matrix and an upper bound for the norms in Section~\ref{sec:Toeplitz}. Section~\ref{sec:Toeplitznear} is devoted to the formula for and norms of  the inverse of the near Toeplitz matrix. Some numerical results are presented in Section~\ref{sec:numer}, followed by concluding remarks in Section~\ref{sec:conclusion}.


\section{Preliminaries} \label{sec:prelim}

It is well known that a Toeplitz matrix cannot be decomposed into a product of two Toeplitz matrices. Some classes of Toeplitz matrices however admit a low-rank decomposition of the form
\begin{equation}
A_{n} = B_{n}C_{n} + \sigma UV^{T},  \quad \sigma \in \mathbb{R}, \label{rankdecom}
\end{equation}
where $B_n$ and $C_n$ are (near) Toeplitz, and $U$ and $V$ are $n \times m$ matrices, with $m < n$. Furthermore, if $A_n$ is nonsingular, the inverse matrix $A^{-1}_n$ can be computed using the Shermann-Morrison formula:
\begin{equation} \label{Aninv}
A^{-1}_{n} = D^{-1}_{n} - \sigma D^{-1}_{n}UM^{-1}_m V^{T}D^{-1}_{n},
\end{equation}
where $D_n = B_n C_n$, $M_m = I_m+\sigma V^{T}D^{-1}_{n}U \in \mathbb{R}^{m \times m}$, and $I_m$ is the identity matrix of size $m$.

As we shall see later, for the seven-diagonal matrices considered in this paper, the above low-rank decomposition involves the tridiagonal matrix
\begin{equation}
C_n =
\begin{pmatrix}
8 & -1 & 0 &  \cdots &  0 \\
-1 & 8 & \ddots  & \ddots&  \vdots \\
0 & \ddots & \ddots & \ddots  & 0 \\
\vdots & \ddots &\ddots & 8 &   -1 \\
0 & \cdots & 0 &  -1 &  8 
\end{pmatrix}_{n \times n}. \label{matCn}
\end{equation}
Some properties of $C_n$ are stated the following lemmas.

\begin{lemma} \label{lem:C}
$C_n$ is positive definite, with the inverse $C^{-1}_n = [c^{-1}_{i,j}]$, $i,j = 1,\dots,n$ given by
\begin{equation*}
c^{-1}_{i,j} = \frac{\gamma_{j}\gamma_{n+1-i}}{\gamma_{n+1}}
\end{equation*}
for $i \ge j$, and $c^{-1}_{i,j} = c^{-1}_{j,i}$, for $i<j$, where 
\begin{equation} \label{eq:defgamma}
\gamma_{k} = \left(r_1^{k}-r_2^{k}\right)/2\sqrt{15}, \quad k \in \mathbb{N},
\end{equation}
with $r_1 = 4+\sqrt{15}$ and $r_2 = 4-\sqrt{15}$.
\end{lemma}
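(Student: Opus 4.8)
The plan is to verify the claimed formula directly by checking that the proposed matrix $X = [c^{-1}_{i,j}]$ satisfies $C_n X = I_n$, using the recurrence satisfied by the sequence $\gamma_k$. First I would record the elementary facts about $\gamma_k$: since $r_1, r_2$ are the roots of $t^2 - 8t + 1 = 0$, the sequence obeys $\gamma_{k+1} = 8\gamma_k - \gamma_{k-1}$ with $\gamma_0 = 0$ and $\gamma_1 = 1$ (this normalization comes from $r_1 - r_2 = 2\sqrt{15}$). Positive definiteness of $C_n$ is immediate, either from Gershgorin/strict diagonal dominance with positive diagonal, or by noting $C_n$ is a symmetric tridiagonal Toeplitz matrix with eigenvalues $8 - 2\cos(j\pi/(n+1)) > 0$; I would also observe $\gamma_k > 0$ for all $k \ge 1$, so in particular $\gamma_{n+1} \neq 0$ and the formula is well defined.

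Next I would set $X = [x_{i,j}]$ with $x_{i,j} = \gamma_{\min(i,j)}\,\gamma_{n+1-\max(i,j)}/\gamma_{n+1}$ and compute the $(i,j)$ entry of $C_n X$, namely $(C_n X)_{i,j} = 8x_{i,j} - x_{i-1,j} - x_{i+1,j}$ (with the convention that terms with index $0$ or $n+1$ are dropped). The key step is to split into cases according to whether $i < j$, $i = j$, or $i > j$. When $i < j$ (so $i+1 \le j$ as well, at least generically), all three of $x_{i-1,j}, x_{i,j}, x_{i+1,j}$ have their first argument equal to the row index, so the combination becomes $\gamma_{n+1-j}/\gamma_{n+1}$ times $\bigl(8\gamma_i - \gamma_{i-1} - \gamma_{i+1}\bigr) = 0$ by the recurrence; the symmetric computation handles $i > j$. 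The diagonal case $i = j$ is where the nonzero value appears: here $8x_{i,i} - x_{i-1,i} - x_{i+1,i} = \bigl(8\gamma_i\gamma_{n+1-i} - \gamma_{i-1}\gamma_{n+1-i} - \gamma_i\gamma_{n-i}\bigr)/\gamma_{n+1}$, and using $8\gamma_i - \gamma_{i-1} = \gamma_{i+1}$ this collapses to $\bigl(\gamma_{i+1}\gamma_{n+1-i} - \gamma_i\gamma_{n-i}\bigr)/\gamma_{n+1}$. I would then invoke the constancy of the "Wronskian-type" quantity $\gamma_{k+1}\gamma_{m} - \gamma_{k}\gamma_{m+1}$ for the second-order linear recurrence — more precisely the identity $\gamma_{a}\gamma_{b+1} - \gamma_{a+1}\gamma_{b} = \gamma_{a-b}$ (a Fibonacci-like addition/Catalan identity, provable by induction on $a$ using the recurrence, or directly from the closed form via $r_1 r_2 = 1$) — to conclude $\gamma_{i+1}\gamma_{n+1-i} - \gamma_i\gamma_{n-i} = \gamma_{n+1}$, so the diagonal entry equals $1$. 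The boundary rows $i = 1$ and $i = n$ need a quick separate check, but they follow from the same algebra once one uses $\gamma_0 = 0$.

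The main obstacle, such as it is, is bookkeeping rather than depth: one must be careful at the boundaries $i,j \in \{1,n\}$ and at the "diagonal-adjacent" indices $|i-j| = 1$, where the case distinctions in $\min$ and $\max$ interact with the edge of the matrix, and one must make sure the correct form of the $\gamma$-identity (addition formula vs. constant-Wronskian form) is applied with the indices in the right order so that no spurious sign or shift creeps in. Once the identity $\gamma_{a}\gamma_{b+1} - \gamma_{a+1}\gamma_{b} = \gamma_{a-b}$ is in hand, everything reduces to the single recurrence $\gamma_{k+1} = 8\gamma_k - \gamma_{k-1}$, and the verification $C_n X = I_n$ is a short computation; symmetry of $X$ (hence of $C_n^{-1}$) is built into the definition, which is consistent with $C_n$ being symmetric. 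This establishes the stated formula.
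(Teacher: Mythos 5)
Your proposal is correct in substance and takes a genuinely different route from the paper: the paper does not prove Lemma~\ref{lem:C} at all, it simply cites \cite{dow2002explicit}, whereas you give a self-contained verification that the candidate matrix $X=[\gamma_{\min(i,j)}\gamma_{n+1-\max(i,j)}/\gamma_{n+1}]$ satisfies $C_nX=I_n$ using only the recurrence $\gamma_{k+1}=8\gamma_k-\gamma_{k-1}$ with $\gamma_0=0$, $\gamma_1=1$. The off-diagonal cases, the boundary rows via $\gamma_0=0$, and positive definiteness (strict diagonal dominance of a symmetric matrix with positive diagonal, or the explicit eigenvalues $8-2\cos(j\pi/(n+1))$) are all handled correctly; what this buys over the paper's citation is a short, fully verifiable argument in the same notation that the rest of the paper uses.

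One correction is needed in the diagonal step. The identity you quote, $\gamma_a\gamma_{b+1}-\gamma_{a+1}\gamma_b=\gamma_{a-b}$, is the constant-index-difference form: both products there have the same index sum $a+b+1$, so it cannot produce $\gamma_{i+1}\gamma_{n+1-i}-\gamma_i\gamma_{n-i}$, whose products have index sums $n+2$ and $n$. The identity you actually need is the addition-type one, $\gamma_{a+1}\gamma_{b+1}-\gamma_a\gamma_b=\gamma_{a+b+1}$ with $a=i$, $b=n-i$; equivalently, note that $W_i=\gamma_{i+1}\gamma_{n+1-i}-\gamma_i\gamma_{n-i}$ is the Casoratian of the two solutions $k\mapsto\gamma_k$ and $k\mapsto\gamma_{n+1-k}$ of the recurrence, hence independent of $i$, so $W_i=W_0=\gamma_1\gamma_{n+1}-\gamma_0\gamma_n=\gamma_{n+1}$. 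Either version is a one-line consequence of the closed form \eqref{eq:defgamma} together with $r_1r_2=1$ (or a telescoping induction with the recurrence). With that substitution, your stated conclusion $\gamma_{i+1}\gamma_{n+1-i}-\gamma_i\gamma_{n-i}=\gamma_{n+1}$ is exactly right, the diagonal entries of $C_nX$ equal $1$, and the proof is complete.
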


\begin{proof}
The proof can be found in~\cite{dow2002explicit}.
\end{proof}

\begin{lemma}\label{lem:1}
Let $\gamma_k$ be defined as in (\ref{eq:defgamma}). Then the following holds for any $k \in \mathbb{N}$.
\begin{enumerate}
  \item[(i)] $\gamma_{k}+\gamma_{k-2}=8\gamma_{k-1}$;
  \item[(ii)] $4 \le  \displaystyle \frac{\gamma_{k+1}}{\gamma_k}  \le 8$.
\end{enumerate}
\end{lemma}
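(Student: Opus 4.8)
The plan is to exploit the fact that $r_1$ and $r_2$ are the two roots of the quadratic $x^2 - 8x + 1 = 0$; indeed $r_1 + r_2 = 8$ and $r_1 r_2 = 16 - 15 = 1$. Hence each sequence of powers $(r_i^k)_{k}$ obeys the second-order linear recurrence $r_i^{k} = 8\,r_i^{k-1} - r_i^{k-2}$, and since $\gamma_k$ is a fixed linear combination of $r_1^k$ and $r_2^k$, it inherits the same recurrence:
\[
  \gamma_k = 8\,\gamma_{k-1} - \gamma_{k-2}.
\]
Rearranging this is precisely statement (i). Along the way I would record the initial values $\gamma_0 = 0$, $\gamma_1 = 1$, $\gamma_2 = 8$ (all immediate from the definition, using $r_1 - r_2 = 2\sqrt{15}$ and $r_1 + r_2 = 8$), since these are needed for the base case of (ii).

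For (ii) I would first note that $\gamma_k > 0$ for every $k \ge 1$: from $0 < r_2 < r_1$ we get $r_1^k > r_2^k$, so $\gamma_k = (r_1^k - r_2^k)/(2\sqrt{15}) > 0$, and thus $\rho_k := \gamma_{k+1}/\gamma_k$ is a well-defined positive number. Dividing the recurrence $\gamma_{k+1} = 8\gamma_k - \gamma_{k-1}$ through by $\gamma_k$ gives the one-step relation
\[
  \rho_k = 8 - \frac{1}{\rho_{k-1}}, \qquad k \ge 2 .
\]
Now induct on $k$. The base case is $\rho_1 = \gamma_2/\gamma_1 = 8 \in [4,8]$. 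For the inductive step, assume $4 \le \rho_{k-1} \le 8$; then $\tfrac18 \le \tfrac{1}{\rho_{k-1}} \le \tfrac14$, so
\[
  \rho_k = 8 - \frac{1}{\rho_{k-1}} \in \Bigl[\,8 - \tfrac14,\ 8 - \tfrac18\,\Bigr] = \Bigl[\tfrac{31}{4},\ \tfrac{63}{8}\Bigr] \subset [4,8],
\]
which closes the induction and establishes (ii).

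A non-inductive alternative for (ii), which I would mention as a remark, is to set $t := (r_2/r_1)^k$, so that $0 < t \le r_2/r_1 < 1$ for $k \ge 1$, and write
\[
  \frac{\gamma_{k+1}}{\gamma_k} = \frac{r_1^{k+1}-r_2^{k+1}}{r_1^{k}-r_2^{k}} = \frac{r_1 - r_2\,t}{1 - t} =: g(t).
\]
Since $g'(t) = (r_1 - r_2)/(1-t)^2 > 0$, the function $g$ is increasing on $[0,1)$, so $g(0) \le g(t) \le g(r_2/r_1)$, i.e. $r_1 \le \gamma_{k+1}/\gamma_k \le r_1 + r_2 = 8$; and $r_1 = 4 + \sqrt{15} > 4$ gives the stated lower bound.

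I do not anticipate a real obstacle here: the argument is elementary once the quadratic $x^2-8x+1$ is brought in. The only mild care points are checking that the relevant quantities are legitimate (i.e. $\gamma_k \neq 0$, so that $\rho_k$ and the divisions make sense) and nailing down the base case, which is why I would compute $\gamma_0,\gamma_1,\gamma_2$ explicitly at the start. The interval $[4,8]$ in (ii) is not tight — the ratios actually decrease monotonically toward $r_1 \approx 7.873$ — so the induction has ample slack and no delicate estimates are required.
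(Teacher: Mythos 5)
Your proposal is correct and follows essentially the same route as the paper: both obtain the three-term recurrence $\gamma_k = 8\gamma_{k-1}-\gamma_{k-2}$ (you via Vieta's relations $r_1+r_2=8$, $r_1r_2=1$; the paper via the expansion $r_{1,2}^k=\alpha_k\pm\gamma_k\sqrt{15}$) and then prove (ii) by induction on the ratio using $\rho_k = 8-1/\rho_{k-1}$. As a minor bonus, your base-case value $\gamma_2/\gamma_1 = 8$ is the correct one (the paper's stated value $4$ is a harmless slip), and your closed-form alternative via $g(t)=(r_1-r_2t)/(1-t)$ is a nice extra but not a different proof in substance.
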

\begin{proof}
Set $r^{k}_{1} = (4 + \sqrt{15})^{k} = \alpha_{k}+\gamma_k\sqrt{15}$ and  $r^{k}_{2} = (4 - \sqrt{15})^{k} = \alpha_{k}-\gamma_k\sqrt{15}$. The parameters $\alpha_k$ and $\gamma_k$ satisfy the recurrence relations
	\begin{equation} 
	\renewcommand\arraystretch{1.4}
	\left\{
	\begin{array}{ll}
	\alpha_{k} = 4\alpha_{k-1}+15\gamma_{k-1},&  \\
	\gamma_{k} = \alpha_{k-1}+4\gamma_{k-1}. &  \\
	\end{array}
	\right.  \label{eq:alpgam}
	\end{equation}
Solving this linear equation system leads to the statement (i).

The (i) part implies $\gamma_{k} \le 8 \gamma_{k-1}$, which is the right inequality of the (ii) part. The left inequality of the (ii) part is proved by induction. For $k = 1$,  $\gamma_2/\gamma_1 = 4$. Thus, (ii) holds for $k = 1$. Suppose (ii) also holds for $k = j-1$, i.e., $\gamma_{j}/\gamma_{j-1} \ge 4$. For $k = j$, by utilizing the (i) part in the process,
$$
 \frac{\gamma_{j+1}}{\gamma_j} = \frac{\gamma_{j+1}}{\gamma_{j-1}} \frac{\gamma_{j-1}}{\gamma_j} = \left( 8 \frac{\gamma_j}{\gamma_{j-1}} - 1 \right) \frac{\gamma_{j-1}}{\gamma_j} = 8 - \frac{\gamma_{j-1}}{\gamma_j} \ge 4.
$$
\end{proof}

\begin{lemma} \label{lem:2}
	Let $\gamma_k$ be defined as in (\ref{eq:defgamma}). Then, for any $p \in \mathbb{N}$,
	\begin{equation} \label{eq:lem21}
	\sum_{k=1}^{p}\gamma_{k} = \frac{1}{6}(\gamma_{p+1}-\gamma_{p}-1),
	\end{equation}
	\begin{equation} \label{eq:lem22}
	\sum_{k=1}^{p}k\gamma_{k} = 
	\frac{1}{6}(p\gamma_{p+1}-(p+1)\gamma_{p}),
	\end{equation}
	\begin{equation} \label{eq:lem23}
	\sum_{k=1}^{p}k^2\gamma_{k} = 
	\frac{1}{18}((3p^{2}+1)\gamma_{p+1}-(3p^{2}+6p+4)\gamma_{p}-1)\;,\;\;
	\end{equation}
	\begin{equation} \label{eq:lem24}
    \sum_{k=1}^{p}k^3\gamma_{k} =
	\frac{1}{6}((p^{3}+p)\gamma_{p+1}-(p^{3}+3p^{2}+4p+2)\gamma_{p}).
	\end{equation}
\end{lemma}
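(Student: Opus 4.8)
The plan is to establish all four identities by induction on $p$, with the three-term recurrence of Lemma~\ref{lem:1}(i) — equivalently $\gamma_{k+1}=8\gamma_k-\gamma_{k-1}$ — doing all the work in the inductive step. For the base case $p=1$ one only needs $\gamma_1=1$ and $\gamma_2=8$, both immediate from \eqref{eq:defgamma} via $r_1r_2=1$ and $r_1+r_2=8$; with these each of \eqref{eq:lem21}--\eqref{eq:lem24} is a one-line numerical check (e.g.\ for \eqref{eq:lem23}, $\tfrac{1}{18}(4\gamma_2-13\gamma_1-1)=\tfrac{1}{18}(32-14)=1=\gamma_1$).

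For the inductive step of \eqref{eq:lem21}, add $\gamma_{p+1}$ to the assumed identity: the right-hand side becomes $\tfrac16(7\gamma_{p+1}-\gamma_p-1)$, and matching this against $\tfrac16(\gamma_{p+2}-\gamma_{p+1}-1)$ is exactly $\gamma_{p+2}=8\gamma_{p+1}-\gamma_p$. The steps for \eqref{eq:lem22}--\eqref{eq:lem24} are identical in spirit: add $(p+1)^m\gamma_{p+1}$ with $m=1,2,3$ to the inductive hypothesis, expand the polynomial-in-$p$ coefficients, and reduce the desired equality to $\gamma_{p+2}=8\gamma_{p+1}-\gamma_p$ (occasionally also invoking \eqref{eq:lem21} at index $p$ to absorb a leftover constant). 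Nothing beyond elementary algebra is involved.

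A cleaner alternative derives \eqref{eq:lem22}--\eqref{eq:lem24} directly from \eqref{eq:lem21} by interchanging the order of summation. Writing $k=\sum_{j=1}^{k}1$, $k^2=\sum_{j=1}^{k}(2j-1)$, and $k^3=\sum_{j=1}^{k}(3j^2-3j+1)$, each higher moment becomes a weighted sum of tail sums $\sum_{k=j}^{p}\gamma_k=\tfrac16(\gamma_{p+1}-\gamma_p)-\tfrac16(\gamma_j-\gamma_{j-1})$, where we set $\gamma_0=0$ (consistent with \eqref{eq:defgamma}). Since $\sum_{j=1}^{p}1=p$, $\sum_{j=1}^{p}(2j-1)=p^2$, and $\sum_{j=1}^{p}(3j^2-3j+1)=p^3$, the leading part contributes $\tfrac16 p^m(\gamma_{p+1}-\gamma_p)$, while the remaining part $\sum_j(\cdots)(\gamma_j-\gamma_{j-1})$ telescopes after an index shift to boundary terms plus the lower-order sums $\sum_j\gamma_j$ and $\sum_j j\gamma_j$, which are already known. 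Eliminating $\gamma_{p-1}$ (and, for \eqref{eq:lem24}, $\gamma_{p-2}$) via the recurrence yields the stated closed forms.

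Either way the only genuine obstacle is bookkeeping: in \eqref{eq:lem23} and especially \eqref{eq:lem24} one must track the polynomial-in-$p$ coefficients carefully and stay consistent about the boundary value $\gamma_0=0$ so that empty sums and telescoped endpoints match. There is no conceptual difficulty — these identities are rigidity consequences of the linear recurrence $\gamma_{k+1}=8\gamma_k-\gamma_{k-1}$, and both routes ultimately rest solely on that recurrence together with the initial values $\gamma_1=1$, $\gamma_2=8$. (One could also expand $\gamma_k$ via \eqref{eq:defgamma} and sum the geometric series $\sum k^m r_i^k$ in closed form, using $r_1r_2=1$, $r_1+r_2=8$, $(r_1-1)(r_2-1)=-6$, $r_1-r_2=2\sqrt{15}$ to simplify; this is more mechanical but explains transparently the appearance of the factors $\tfrac16$ and $\tfrac1{18}$.)
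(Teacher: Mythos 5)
Your proof is correct, but it takes a genuinely different route from the paper. The paper proves the lemma constructively: it substitutes the closed form $\gamma_k=(r_1^k-r_2^k)/2\sqrt{15}$, sums the geometric series (and, for the higher moments, differentiates the geometric-sum formula), and then reduces the resulting expressions with Lemma~\ref{lem:1}; it writes this out only for \eqref{eq:lem21} and \eqref{eq:lem22} and declares the remaining two identities analogous. Your primary argument is instead a verification by induction on $p$, resting solely on the recurrence $\gamma_{p+2}=8\gamma_{p+1}-\gamma_p$ (Lemma~\ref{lem:1}(i)) and the initial values $\gamma_1=1$, $\gamma_2=8$: I checked the base cases and all four inductive steps, and each closes exactly as you say, e.g.\ for \eqref{eq:lem24} adding $(p+1)^3\gamma_{p+1}$ gives $\tfrac16\bigl((7p^3+18p^2+19p+6)\gamma_{p+1}-(p^3+3p^2+4p+2)\gamma_p\bigr)$, which is precisely the claimed right-hand side at $p+1$ after eliminating $\gamma_{p+2}$. (Minor remark: the invocation of \eqref{eq:lem21} ``to absorb a leftover constant'' is never actually needed; the recurrence alone suffices in every case.) What each approach buys: your induction is uniform across all four identities, avoids the derivative-of-geometric-series bookkeeping and the repeated Lemma~\ref{lem:1} simplifications that the paper largely leaves to the reader, and is easy to audit; the paper's method, closer to your parenthetical third option, actually derives the closed forms rather than merely confirming given ones, which is why the factors $\tfrac16$ and $\tfrac1{18}$ emerge naturally there. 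Your second route (interchanging the order of summation to bootstrap the higher moments from \eqref{eq:lem21}) is also sound in outline, but it is sketchier and unnecessary given that the induction is already complete.
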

\begin{proof}
The proof uses Vieta's formula and Lemma~\ref{lem:1}. Since the proof for each relation is similar, we shall show the proof only for 	
\eqref{eq:lem21} and \eqref{eq:lem22}.
	\begin{align*}
	\sum_{k=1}^{p}\gamma_{k} &= \sum_{k=1}^{p}\frac{r^{k}_{1}-r^{k}_{2}}{2\sqrt{15}} = \frac{1}{2\sqrt{15}}\left(\sum_{k=0}^{p}r^{k}_{1} - \sum_{k=0}^{p}r^{k}_{2}\right) = \frac{1}{2\sqrt{15}}\left(\frac{r^{p+1}_{1}-1}{r_{1}-1}-\frac{r^{p+1}_{2}-1}{r_{2}-1}\right)  \\
	&= \frac{1}{12\sqrt{15}}(r^{p+1}_{1}-r^{p}_{1}+r^{p}_{2}-r^{p+1}_{2}+r_{2}-r_{1}) = \frac{1}{6}(\gamma_{p+1}-\gamma_{p}-1).
	\end{align*}
Next,
	\begin{align*}
	\sum_{k=1}^{p}k\gamma_{k} &= 
	\sum_{k=1}^{p}\frac{k(r^{k}_{1}-r^{k}_{2})}{2\sqrt{15}} =
	\frac{1}{2\sqrt{15}}\left(r_{1}\sum_{k=1}^{p}k r^{k-1}_{1} -
	r_{2}\sum_{k=1}^{p}k r^{k-1}_{2}\right)  \\
	&=\frac{1}{2\sqrt{15}}\left(r_{1}\left(\sum_{k=1}^{p}r^{k}_{1}\right)^{'}-
	r_{2}\left(\sum_{k=1}^{p}r^{k}_{2}\right)^{'}\right)  \\
    &=\frac{1}{2\sqrt{15}}\left(\frac{pr^{p+2}_{1}-(p+1)r^{p+1}_{1}+r_{1}}{(r_{1}-1)^{2}} - 
	\frac{pr^{p+2}_{2}-(p+1)r^{p+1}_{2}+r_{2}}{(r_{2}-1)^{2}}\right) \\
	&=\frac{1}{36}\left(p\gamma_{p+2}-(3p+1)\gamma_{p+1}+(3p+2)\gamma_{p}-(p+1)\gamma_{p-1}\right).
	\end{align*}
In the above derivation, we have used the relation (\ref{eq:lem21}) to evaluate derivatives of the sum. The relation~\eqref{eq:lem22} is obtained from the above equation by applying Lemma \ref{lem:1} multiple times.
\end{proof}

\begin{lemma} \label{lem:3}
Let $\gamma_k$ be defined as in (\ref{eq:defgamma}) and $\alpha_{k}=(4 + \sqrt{15})^{k}-\gamma_k\sqrt{15}$. Then
	\begin{equation*}
	\alpha_{k}-\alpha_{k-2} = 30\gamma_{k-1}
	\end{equation*}
	is true for any $k \in \mathbb{N}$.
\end{lemma}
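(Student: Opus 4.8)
The plan is to reduce everything to the closed forms already introduced in the proof of Lemma~\ref{lem:1}, namely $r_1^k = \alpha_k + \gamma_k\sqrt{15}$ and $r_2^k = \alpha_k - \gamma_k\sqrt{15}$ with $r_1 = 4+\sqrt{15}$, $r_2 = 4-\sqrt{15}$. Adding and subtracting these gives $\alpha_k = (r_1^k+r_2^k)/2$ and $\gamma_k = (r_1^k-r_2^k)/(2\sqrt{15})$, which is consistent with the definition of $\alpha_k$ in the statement. The two elementary identities I would lean on are $r_1+r_2 = 8$ and, crucially, $r_1 r_2 = 4^2-15 = 1$.

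First I would write $\alpha_k-\alpha_{k-2} = \tfrac12\bigl(r_1^{k-2}(r_1^2-1)+r_2^{k-2}(r_2^2-1)\bigr)$. Using $r_1r_2 = 1$ I would factor $r_1^2-1 = r_1^2-r_1r_2 = r_1(r_1-r_2) = 2\sqrt{15}\,r_1$ and, symmetrically, $r_2^2-1 = r_2(r_2-r_1) = -2\sqrt{15}\,r_2$. Substituting back, the bracket collapses to $2\sqrt{15}\,(r_1^{k-1}-r_2^{k-1})$, so that $\alpha_k-\alpha_{k-2} = \sqrt{15}\,(r_1^{k-1}-r_2^{k-1}) = \sqrt{15}\cdot 2\sqrt{15}\,\gamma_{k-1} = 30\gamma_{k-1}$, which is the assertion. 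A purely recurrence-based alternative is also available: from $\gamma_k = \alpha_{k-1}+4\gamma_{k-1}$ in \eqref{eq:alpgam} one has $\alpha_{k-1} = \gamma_k-4\gamma_{k-1}$, hence $\alpha_k-\alpha_{k-2} = (\gamma_{k+1}-\gamma_{k-1})-4(\gamma_k-\gamma_{k-2})$, and then applying the three-term recurrence $\gamma_k+\gamma_{k-2} = 8\gamma_{k-1}$ of Lemma~\ref{lem:1}(i) twice reduces the right-hand side to $30\gamma_{k-1}$.

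I do not anticipate a genuine obstacle here; the factorization via $r_1 r_2 = 1$ makes the computation essentially a two-line affair. The only point requiring a little care is index bookkeeping at the low end --- one should either restrict to $k\ge 2$ or observe that with $\gamma_0 = 0$, $\gamma_1 = 1$, $\alpha_0 = 1$, $\alpha_1 = 4$ (and the formal extension $r_1^{-1}=r_2$ coming from $r_1 r_2 = 1$) the identity also holds for $k = 1$, since then both sides vanish.
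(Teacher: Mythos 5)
Your argument is correct, and your primary route is genuinely different from the paper's. The paper stays entirely inside the recurrence framework: from \eqref{eq:alpgam} it first derives $\gamma_k-\gamma_{k-2}=2\alpha_{k-1}$, applies this identity at shifted indices to write $2(\alpha_k-\alpha_{k-2})=\gamma_{k+1}-2\gamma_{k-1}+\gamma_{k-3}$, and then invokes Lemma~\ref{lem:1}(i) repeatedly to collapse the right-hand side to $60\gamma_{k-1}$. Your main computation instead works with the closed forms $\alpha_k=(r_1^k+r_2^k)/2$, $\gamma_k=(r_1^k-r_2^k)/(2\sqrt{15})$ and exploits $r_1r_2=1$ to factor $r_1^2-1=2\sqrt{15}\,r_1$ and $r_2^2-1=-2\sqrt{15}\,r_2$; this makes the identity a two-line verification that bypasses Lemma~\ref{lem:1} entirely, and it also disposes cleanly of the low-index cases (the paper's manipulation implicitly needs $\gamma_{k-3}$, i.e.\ $k\ge 3$ or an extension to nonpositive indices, a point the paper does not address and you do). Your recurrence-based alternative, substituting $\alpha_{k-1}=\gamma_k-4\gamma_{k-1}$ and using $\gamma_k+\gamma_{k-2}=8\gamma_{k-1}$, is essentially the paper's argument in a slightly tidier form (it avoids the $\gamma_{k-3}$ term). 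The closed-form route buys brevity and self-containedness; the paper's route has the mild advantage of staying within the recurrence toolkit \eqref{eq:alpgam} and Lemma~\ref{lem:1} that the rest of the preliminaries are built on.
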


\begin{proof}
By using the recurrence relations $\gamma_{k}=\alpha_{k-1}+4\gamma_{k-1}$ and  $\alpha_{k} = 4\gamma_{k} - \gamma_{k-1}$ from~\eqref{eq:alpgam} we obtain $$\gamma_{k}-\gamma_{k-2}=2\alpha_{k-1}$$
Applying this identity to the $(k-2)$ and $(k-1)$ term, we have
	\begin{equation*}
	2(\alpha_{k}-\alpha_{k-2})=(\gamma_{k+1}-\gamma_{k-1})-(\gamma_{k-1}-\gamma_{k-3}) = \gamma_{k+1}-2\gamma_{k-1}+\gamma_{k-3}.
	\end{equation*}
	Applying Lemma~\ref{lem:1} several times to the above equation leads to the statement in the lemma.
\end{proof}

\begin{lemma} \label{lem:invCn}
For $p \in \{0,1,\infty\}$, $\|C^{-1}_n\|_p \le 1/6.$
\end{lemma}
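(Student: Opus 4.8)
The plan is to reduce the three cases to a single cheap estimate for the $\ell_\infty$ (equivalently $\ell_1$) operator norm, and then deduce the spectral bound from it. Since $C_n$ is symmetric, so is $C_n^{-1}$, and because $0<r_2<r_1$ we have $\gamma_k=(r_1^k-r_2^k)/(2\sqrt{15})>0$ for every $k\ge 1$; hence, by Lemma~\ref{lem:C}, every entry $c^{-1}_{i,j}=\gamma_j\gamma_{n+1-i}/\gamma_{n+1}$ is strictly positive. Consequently $\|C^{-1}_n\|_1=\|C^{-1}_n\|_\infty=\max_i\sum_{j=1}^n |c^{-1}_{i,j}|=\max_i\sum_{j=1}^n c^{-1}_{i,j}$, the largest row sum, so it suffices to show $\sum_{j=1}^n c^{-1}_{i,j}\le 1/6$ for each $i$.

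Rather than summing the closed form directly, I would test $C_n^{-1}$ against the all-ones vector $\mathbf{1}=(1,\dots,1)^T$. From~\eqref{matCn} one reads off $C_n\mathbf{1}=(7,6,6,\dots,6,7)^T$: the interior rows contribute $8-1-1=6$ and the two boundary rows contribute $8-1=7$. Multiplying by $C_n^{-1}$ and taking the $i$-th component,
\begin{equation*}
1=(\,C_n^{-1}(C_n\mathbf{1})\,)_i=\sum_{j=1}^n c^{-1}_{i,j}(C_n\mathbf{1})_j=6\sum_{j=1}^n c^{-1}_{i,j}+c^{-1}_{i,1}+c^{-1}_{i,n},
\end{equation*}
so that $\sum_{j=1}^n c^{-1}_{i,j}=\tfrac16\bigl(1-c^{-1}_{i,1}-c^{-1}_{i,n}\bigr)\le \tfrac16$, using positivity of the two corner entries. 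This disposes of $p\in\{1,\infty\}$.

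For the Euclidean norm (reading the listed $p=0$ as the spectral norm $p=2$) I would simply invoke the standard inequality $\|C^{-1}_n\|_2\le\sqrt{\|C^{-1}_n\|_1\,\|C^{-1}_n\|_\infty}\le 1/6$. Alternatively one can note that $C_n$ is a tridiagonal Toeplitz matrix whose eigenvalues $8-2\cos\!\bigl(k\pi/(n+1)\bigr)$, $k=1,\dots,n$, lie strictly between $6$ and $10$, whence $\|C^{-1}_n\|_2=1/\lambda_{\min}(C_n)<1/6$.

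I do not expect a real obstacle here; the only points demanding care are confirming the sign of the entries of $C_n^{-1}$ and getting the two boundary rows of $C_n\mathbf{1}$ right, since those corner terms are exactly what make the bound hold (with room to spare). If a proof avoiding the test-vector device is preferred, the alternative is to evaluate $\sum_{j=1}^n \gamma_j\gamma_{n+1-i}/\gamma_{n+1}$ directly via~\eqref{eq:lem21} and then control the ratios $\gamma_{n+1-i}/\gamma_{n+1}$ using the estimate $4\le\gamma_{k+1}/\gamma_k\le 8$ of Lemma~\ref{lem:1}; this works but is the more computation-heavy route.
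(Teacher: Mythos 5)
Your argument is correct, but it is a genuinely different route from the paper's. The paper disposes of this lemma by citation: it invokes the known exact formula for the $\infty$-norm of the inverse of a diagonally dominant symmetric tridiagonal matrix $T_n$ with diagonal $b$ and off-diagonals $-1$ (from the reference given there), namely $\|T_n^{-1}\|_\infty = \tfrac{1}{b-2} - \tfrac{2}{r_b^{(n+1)/2}}$ with $r_b = \tfrac12(b+\sqrt{b^2-4})$, and sets $b=8$ so that $\tfrac{1}{b-2}=\tfrac16$. You instead give a self-contained proof: positivity of the entries $c^{-1}_{i,j}=\gamma_j\gamma_{n+1-i}/\gamma_{n+1}$ (immediate from $r_1>r_2>0$, or from $C_n$ being an M-matrix), reduction of the $1$- and $\infty$-norms to the largest row sum, and the test-vector identity $1=(C_n^{-1}C_n\mathbf{1})_i = 6\sum_j c^{-1}_{i,j}+c^{-1}_{i,1}+c^{-1}_{i,n}$, which even yields the strict bound $\sum_j c^{-1}_{i,j}<1/6$; the $2$-norm case (the listed ``$p=0$'' is evidently a typo for $p=2$, which is how the bound is used later) follows either by $\|C_n^{-1}\|_2\le\sqrt{\|C_n^{-1}\|_1\|C_n^{-1}\|_\infty}$ or from the eigenvalues $8-2\cos(k\pi/(n+1))>6$. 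Your boundary-row computation $C_n\mathbf{1}=(7,6,\dots,6,7)^T$ is right, and the corner entries are exactly the slack that makes the estimate hold. What the paper's route buys is an exact expression (hence sharper, $n$-dependent information) at the cost of relying on an external result; what yours buys is a short, elementary, fully verifiable argument within the paper, at the cost of only an upper bound --- which is all the lemma claims and all that is used in Theorems~\ref{theo:1} and~\ref{thm:norminv}.
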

\begin{proof}
A proof for general diagonally-dominant symmetric tridiagonal matrices $T_n = tril(-1,b,-1)$ is given in~\cite{amanbek01EK}; see also \cite{Tan19} for alternative bounds. 
For $b > 2$, 
\begin{align}
    \|T_n\|_{\infty} = \frac{1}{b-2} - \frac{2}{r_b^{\frac{n+1}{2}}},
\end{align}
where $r_b = \frac{1}{2}(b + \sqrt{b^2-4})$. For $C_n$, setting $b = 8$ leads to the bound in the lemma.
\end{proof}

\section{The Toeplitz case} \label{sec:Toeplitz}

In this section, we consider the case where~\eqref{form:An} is Toeplitz ($a_0 = 56$ and $a_1 = -39$). The seven-diagonal matrix $A_n$ can be decomposed into a rank-2 decomposition~\eqref{rankdecom}, with $\sigma = 1$,
\begin{equation}
\label{banded}
B_n=
\begin{pmatrix}
6 & -4 & 1 & 0 &  \cdots  & \cdots  & 0 \\
-4 & 6 & -4 & 1 & \ddots & \ddots   & \vdots  \\
1 & -4 & 6 & -4 & 1 & \ddots  &  \vdots  \\
0 & \ddots  & \ddots  & \ddots & \ddots & \ddots & 0 \\
 \vdots  & \ddots  & 1 & -4 & 6 & -4 & 1 \\
 \vdots  & \ddots  & \ddots  & 1 & -4 & 6 & -4 \\
0 & \cdots & \cdots & 0 & 1 & -4 & 6 
\end{pmatrix}
_{n\times n},
U =
\begin{pmatrix}
4 & 0 \\
-1 & 0 \\
0  & 0 \\
\vdots & \vdots \\
0 & 0 \\
0 & -1 \\
0 & 4 \\
\end{pmatrix}
_{n\times 2}, 
V =
\begin{pmatrix}
1 & 0 \\
0 & 0 \\
0  & 0 \\
\vdots & \vdots \\
0 & 0 \\
0 & 0 \\
0 & 1
\end{pmatrix}
_{n\times 2}, 
\end{equation}
and with $C_n$ given in~\eqref{matCn}. 
The matrix $B_n$ in the decomposition is nonsingular. The entries of the inverse matrix $B^{-1}_n$ is derived in~\cite{hoskins1972some} and given for $i \ge j$ by the formula
\begin{equation*}
b^{-1}_{i,j} = -\frac{(n+1-i)(n+2-i)j(j+1)}{6(n+1)(n+2)(n+3)} \left[(i+1)(j-1)(n+3) - i(j+2)(n+1)\right].
\end{equation*}

Regarding the matrix $B_n$, we have the following lemma
\begin{lemma} \label{lemma:spdB}
The matrix $B_n$ in~\eqref{banded} is positive definite.
\end{lemma}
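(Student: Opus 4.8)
The plan is to exhibit $B_n$ as a Gram matrix, which immediately yields positive definiteness. Observe that $B_n = \mathrm{tril}(1,-4,6,-4,1)$ is the fourth-difference matrix, i.e. the "square" of the second-difference operator up to a correction at the boundary. Concretely, let $S_n$ be the $n\times n$ symmetric tridiagonal matrix $\mathrm{tril}(-1,2,-1)$ (the standard discrete Laplacian with Dirichlet conditions). A direct computation of the pentadiagonal entries shows that $S_n^2 = \mathrm{tril}(1,-4,6,-4,1)$ in every row except the first and last, where $(S_n^2)_{1,1} = (S_n^2)_{n,n} = 5$ rather than $6$. Hence $B_n = S_n^2 + e_1 e_1^T + e_n e_n^T$, where $e_1,e_n$ are the first and last standard basis vectors. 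Since $S_n$ is symmetric and nonsingular (its eigenvalues are $2-2\cos(k\pi/(n+1))>0$), $S_n^2$ is positive definite, and adding the positive semidefinite rank-two term $e_1e_1^T + e_ne_n^T$ keeps it positive definite. Therefore $B_n$ is positive definite.

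The only step requiring care is verifying the identity $B_n = S_n^2 + e_1e_1^T + e_ne_n^T$. This is a routine but must-be-checked band computation: for an interior row $i$ (with $3\le i\le n-2$), $(S_n^2)_{i,j} = \sum_k (S_n)_{i,k}(S_n)_{k,j}$ is supported on $|i-j|\le 2$ and equals $1,-4,6,-4,1$ for $j = i-2,\dots,i+2$; for rows $1,2$ (and symmetrically $n-1,n$) one checks the truncated sums give $(S_n^2)_{1,1}=5$, $(S_n^2)_{1,2}=-4$, $(S_n^2)_{1,3}=1$, $(S_n^2)_{2,2}=6$, etc., so that only the $(1,1)$ and $(n,n)$ entries are deficient by exactly $1$. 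I expect no genuine obstacle here; the main thing is to present the boundary bookkeeping cleanly.

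An alternative, if one prefers to avoid the explicit squaring, is to use the already-stated fact (from~\cite{hoskins1972some}) that $B_n$ is nonsingular together with a continuity/connectedness argument: $B_n$ is a symmetric matrix whose leading principal minors can be tracked, or one notes that $B_n$ lies in the connected family $S_n^2 + t(e_1e_1^T+e_ne_n^T)$ for $t\in[0,1]$, all members of which are nonsingular (each is $S_n^2$ plus a PSD perturbation, hence with strictly positive eigenvalues once $S_n^2$ is), so the signature is constant along the path and equals that of $S_n^2$, namely all-positive. Either route closes the lemma; I would present the Gram-matrix argument as the primary proof since it is the shortest and most transparent.
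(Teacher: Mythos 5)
Your proof is correct, and it is essentially the paper's argument in matrix form: the identity $B_n = S_n^2 + e_1e_1^T + e_ne_n^T$ (with $S_n$ the tridiagonal $(-1,2,-1)$ matrix) is exactly the statement that $\bsx^T B_n \bsx = \|S_n\bsx\|^2 + x_1^2 + x_n^2$, which is the sum-of-squares decomposition $x_1^2+(2x_1-x_2)^2+\sum_i (x_{i-1}-2x_i+x_{i+1})^2+(2x_n-x_{n-1})^2+x_n^2$ used in the paper. The boundary bookkeeping you flag (corner entries $5$ vs.\ $6$) checks out, so the lemma is established by the same route, with your eigenvalue remark on $S_n$ replacing the paper's direct observation that the squares vanish only at $\bsx=0$.
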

\begin{proof}
For any vector $\bsx = (x_1 \, \dots \, x_n)^T \in \mathbb{R}^n$, 
\begin{align}
\bsx^{T}B_{n}\bsx &= 6(x_{1}^2 + x_{2}^2+\cdots+x_{n}^2) - 8(x_{1}x_{2}+x_{2}x_{3}+\cdots+x_{n}x_{n-1})+2(x_{1}x_{3}+x_{2}x_{4}+x_{3}x_{5}+\cdots+x_{n-2}x_{n}) \notag \\
&= x_{1}^2+(2x_{1}-x_{2})^2 + (x_{1}-2x_{2}+x_{3})^{2}+ \dots +(x_{n-2}-2x_{n-1}+x_{n})^2+(2x_{n}-x_{n-1})^2+x_{n}^2 \notag \\
&\ge 0, \notag
\end{align}
with equality holding only when $\bsx = 0$.
\end{proof}

\subsection{Exact inverse formula}

An explicit formula for $A_n^{-1}$ can be derived by evaluating the right-hand side of the decomposition~\eqref{Aninv}.

Starting with the first term on the right-hand side, let $D_n^{-1} := C_n^{-1} B_n^{-1} = [d^{-1}_{i,j}]$, with
\begin{align}
d^{-1}_{i,j} &= \sum_{k=1}^{n} c^{-1}_{i,k} b^{-1}_{k,j} \notag \\
        &= \sum_{k=1}^{j}c^{-1}_{i, k} b^{-1}_{j,k} + \sum_{k=j+1}^{i}c^{-1}_{i, k} b^{-1}_{k, j} +\sum_{k=i+1}^{n}c^{-1}_{k, i} b^{-1}_{k, j} \notag \\
&= \sum_{k=1}^{j}\frac{\gamma_{k}\gamma_{n+1-i}}{\gamma_{n+1}}b^{-1}_{j, k}+
\sum_{k=j+1}^{i}\frac{\gamma_{k}\gamma_{n+1-i}}{\gamma_{n+1}}b^{-1}_{k, j} +
\sum_{k =1}^{n-i}\frac{\gamma_{i}\gamma_{k}}{\gamma_{n+1}}b^{-1}_{n+1-k, j}, \label{eq:dinv}
\end{align}
due to symmetry of $B_n$ and $C_n$. Furthermore, $D^{-1}_n$ is centrosymmetric, due to the centrosymmetry of $B_n^{-1}$ and $C_n^{-1}$. By using Lemmas \ref{lem:1}--\ref{lem:3} and after necessary simplifications, we obtain, for $i \geq j$, 
\begin{align}\label{eq:Dinv}
	d^{-1}_{i,j} &= \frac{\gamma_j \gamma_{n+1-i}}{36 \gamma_{n+1}} + \eta \zeta_1\left(  \frac{\gamma_{n+1-i}\gamma_{i+1} - \gamma_{n-i} \gamma_i}{\gamma_{n+1}} \right) + \eta \left( \zeta_2 \frac{\gamma_{n+1-i}}{\gamma_{n+1}} + \zeta_3 \frac{\gamma_i}{\gamma_{n+1}} \right),
\end{align}
where
\begin{align*}
  \eta      &= - \frac{1}{6(n+1)(n+2)(n+3)}, \\
   \zeta_1 &= \frac{1}{6}j(j+1)\{ (j-3i-1)n^3 + (6j + 6i^2 - 12i -4)n^2 + ((-3i^2 - 3i +10)j - 3i^3 + 18i^2 - 15i -5)n \\
               & \quad \quad + (2i^3 - 3i^2 - 3i +5)j - 5i^3 +12i^2 - 6i - 2\},\\
   \zeta_2 &= \frac{1}{6}(n+1)j (n+1-j)(n+2-j), \\
   \zeta_3 &= \frac{1}{6}(n+1)j(j+1)(n+1-j).
\end{align*}

For the second term on the right-hand side of~\eqref{Aninv}, let $M_2 = [m_{i,j}] \in \mathbb{R}^{2 \times 2}$. From a direct calculation and the use of  Lemmas \ref{lem:1}--\ref{lem:3} and~\eqref{eq:Dinv}, one can show that 
\begin{align} 
m_{11} &= m_{22} = 1+ 4d^{-1}_{1,1}-d^{-1}_{1,2} = 1 + \frac{11n^{2}+5n}{36(n+1)(n+2)}+\frac{1}{36\gamma_{n+1}}\left(\alpha_{n}-\frac{2+2n\gamma_{n}}{n+2}\right), \label{eq:a} \\
m_{12} &= m_{21} = 4d^{-1}_{n,1}-d^{-1}_{n,2} = \frac{7n+4}{18(n+1)(n+2)}-\frac{1}{18\gamma_{n+1}}\left(2+\frac{n+\gamma_{n}}{n+2}\right). \label{eq:b}
\end{align}

\begin{lemma} \label{lem:M}
$\det(M)>0$.
\end{lemma}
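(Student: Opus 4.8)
The plan is to exploit the very simple structure of $M$. By~\eqref{eq:a}--\eqref{eq:b}, $M$ is the symmetric $2\times2$ matrix with both diagonal entries equal to $m_{11}$ and both off-diagonal entries equal to $m_{12}$, so
\[
\det(M) = m_{11}^2 - m_{12}^2 = (m_{11}-m_{12})(m_{11}+m_{12}).
\]
It therefore suffices to prove the two rather loose bounds $m_{11} > 1$ and $|m_{12}| < 1$: from these $m_{11}^2 > 1 > m_{12}^2$, hence $\det(M) > 1 - 1 = 0$, and in particular both factors above are positive.

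To show $m_{11} > 1$, I would note first that the rational term $\frac{11n^2+5n}{36(n+1)(n+2)}$ in~\eqref{eq:a} is positive, so it remains only to verify that the last term is nonnegative, i.e., that $\alpha_n(n+2) > 2 + 2n\gamma_n$. Here I would use the identity $\alpha_n = 4\gamma_n - \gamma_{n-1}$ (which follows from~\eqref{eq:alpgam} and was already used in the proof of Lemma~\ref{lem:3}) together with the bound $\gamma_n/\gamma_{n-1} \ge 4$ of Lemma~\ref{lem:1}(ii), giving $\alpha_n \ge \tfrac{15}{4}\gamma_n$. Then
\[
\alpha_n(n+2) - 2n\gamma_n \;\ge\; \gamma_n\Big(\tfrac{15}{4}(n+2) - 2n\Big) \;=\; \frac{7n+30}{4}\,\gamma_n \;>\; 2 ,
\]
since $n \ge 7$ and $\gamma_n \ge \gamma_1 = 1$; hence $m_{11} > 1$.

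To bound $|m_{12}|$, I would read off from~\eqref{eq:b} that the subtracted quantity $\frac{1}{18\gamma_{n+1}}\big(2 + \frac{n+\gamma_n}{n+2}\big)$ is positive, so $m_{12} < \frac{7n+4}{18(n+1)(n+2)} \le \frac{7}{18(n+2)} < 1$. For the lower bound, I would drop the positive first term and use $\gamma_{n+1} \ge 1$ and $\gamma_n/\gamma_{n+1} \le 1/4$ (again Lemma~\ref{lem:1}(ii)):
\[
\frac{1}{18\gamma_{n+1}}\Big(2 + \frac{n+\gamma_n}{n+2}\Big)
= \frac{1}{18}\Big(\frac{2}{\gamma_{n+1}} + \frac{n}{(n+2)\gamma_{n+1}} + \frac{\gamma_n}{(n+2)\gamma_{n+1}}\Big)
< \frac{1}{18}(2 + 1 + 1) = \frac{2}{9} ,
\]
so $m_{12} > -\tfrac29 > -1$. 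Combining, $|m_{12}| < 1$, which together with $m_{11} > 1$ yields $\det(M) = m_{11}^2 - m_{12}^2 > 0$.

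I do not expect a genuine obstacle: once the closed forms~\eqref{eq:a}--\eqref{eq:b} are in hand, the whole argument collapses to the elementary ratio bounds $4 \le \gamma_{k+1}/\gamma_k \le 8$ from Lemma~\ref{lem:1}(ii) and the identity $\alpha_k = 4\gamma_k - \gamma_{k-1}$, with only mild bookkeeping of the several $\gamma$-terms. (A more conceptual alternative would be to invoke the matrix determinant lemma $\det(A_n) = \det(B_n)\det(C_n)\det(M)$ with $B_n,C_n$ positive definite by Lemmas~\ref{lem:C} and~\ref{lemma:spdB}, so that $\det(M) > 0 \Leftrightarrow \det(A_n) > 0$; but proving $\det(A_n) > 0$ directly would itself require work, so the computation above is the more efficient route.)
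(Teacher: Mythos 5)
Your argument is correct and is essentially the paper's approach: both work directly from the closed forms \eqref{eq:a}--\eqref{eq:b}, use the ratio bound of Lemma~\ref{lem:1}(ii) (together with $\alpha_n = 4\gamma_n - \gamma_{n-1}$), and conclude via $\det(M)=m_{11}^{2}-m_{12}^{2}$. The only difference is the choice of intermediate inequalities --- you establish $m_{11}>1$ and $|m_{12}|<1$, whereas the paper proves $m_{11}>m_{12}>0$ --- which does not change the method (though note the paper's stronger fact $m_{12}>0$ is what later supports the claim $m_{11}+m_{12}>1$ in the proof of Theorem~\ref{theo:1}, which your looser bound alone would not give).
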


\begin{proof}
Note that $11n^2-9n-8>9(n^2-n-1)>0$ and $36(n+2)\gamma_{n+1} - (2n-2)\gamma_{n}>(2n-2)(\gamma_{n+1}-\gamma_{n})>0$. Furthermore, $\gamma_{n+1}>\gamma_{n}$ and $\gamma_{n+1}>n$. Using these inequalities,  for $n \ge 2$,
\begin{align}
m_{11} - m_{12} &= 1 + \frac{11n^2-9n-8}{36(n+1)(n+2)} + \frac{1}{18 \gamma_{n+1}}\Big( \frac{\alpha_{n}}{2}+2-\frac{(n-1)(\gamma_{n}-1)}{n+2}\Big) \notag \\
&= \frac{11n^2-9n-8}{36(n+1)(n+2)} + \frac{\alpha_{n}(n+2)+6n+6+36(n+2)\gamma_{n+1} - (2n-2)\gamma_{n}}{36(n+2)\gamma_{n+1}} > 0. \notag
\end{align}

Next, for $n \geq 2$, using~\eqref{eq:b},
\begin{align}
m_{12} 
&= \frac{(n+1)(\gamma_{n+1}-\gamma_{n})+(6n+3)\gamma_{n+1}-3n^2-7n-4}{18(n+1)(n+2)\gamma_{n+1}} \notag \\
&> \frac{(n+1)(\gamma_{n+1}-\gamma_{n}) + (n-2)(3n+2)}{18(n+1)(n+2)\gamma_{n+1}} > 0. \notag 
\end{align}
Therefore, $m_{11} > m_{12} > 0$, and hence $m_{11}^2 - m_{12}^2 > 0$.
\end{proof}

By combining the above results and simplifying the expressions for $i \geq j$, the explicit formula of the inverse of $A_n$ can be written as
\begin{align} 
a^{-1}_{i, j} = d^{-1}_{i,j} + \frac{1}{m_{11}^{2}-m_{12}^{2}}\left((m_{12}d^{-1}_{1, j} - m_{11}d^{-1}_{n, j})(4d^{-1}_{i,n}-d^{-1}_{i,n-1})+(m_{12}d^{-1}_{n, j}-m_{11}d^{-1}_{1, j})(4d^{-1}_{i,1}-d^{-1}_{i,2})\right). \label{eq:aijinv}
\end{align} 
where the coefficents on the right-hand side are computed using (\ref{eq:defgamma}), (\ref{eq:Dinv}), (\ref{eq:a}) and (\ref{eq:b}).

\begin{theorem} \label{Ainvspd}
$A_n$ is positive definite.
\end{theorem}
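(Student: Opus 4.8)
The plan is to prove positive-definiteness directly from the quadratic form $\bsx^{T}A_n\bsx$, in the same spirit as the sum-of-squares computation used for $B_n$ in Lemma~\ref{lemma:spdB}. The key preliminary is an algebraic decomposition: writing $T_n=\mathrm{tril}(-1,2,-1)$ for the symmetric tridiagonal second-difference matrix, so that $C_n=T_n+6I_n$, one has
\[
A_n \;=\; T_n^{2}C_n + R_n \;=\; T_n C_n T_n + R_n ,
\]
where the last equality uses that $T_n$ and $C_n=T_n+6I_n$ commute, and $R_n$ is the ``corner'' matrix whose only nonzero entries are $(R_n)_{1,1}=(R_n)_{n,n}=12$ and $(R_n)_{1,2}=(R_n)_{2,1}=(R_n)_{n-1,n}=(R_n)_{n,n-1}=-1$. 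I would verify this by an entrywise comparison with the rank-two decomposition $A_n=B_nC_n+UV^{T}$ of Section~\ref{sec:Toeplitz}; since $B_n=T_n^{2}+e_1e_1^{T}+e_ne_n^{T}$, one gets $A_n=T_n^{2}C_n+(e_1e_1^{T}+e_ne_n^{T})C_n+UV^{T}$, and the last two terms combine exactly to $R_n$. Because the interior seven-diagonal stencil $[-1,12,-39,56,-39,12,-1]$ of $T_n^{2}C_n$ already matches that of $A_n$, only finitely many corner entries need be checked.

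Granting the decomposition, fix $\bsx=(x_1,\dots,x_n)^{T}$ and set $\bsy=T_n\bsx$. Since $T_n$ is symmetric, $T_nC_nT_n=T_n^{T}C_nT_n$, hence $\bsx^{T}A_n\bsx = \bsy^{T}C_n\bsy + 12x_1^{2}-2x_1x_2 + 12x_n^{2}-2x_{n-1}x_n$. The first term splits, using $C_n=6I_n+T_n$ and the standard identity $\bsy^{T}T_n\bsy=\bsy_1^{2}+\bsy_n^{2}+\sum_{i=1}^{n-1}(\bsy_i-\bsy_{i+1})^{2}$, as $\bsy^{T}C_n\bsy=6\|\bsy\|^{2}+\bsy_1^{2}+\bsy_n^{2}+\sum_{i=1}^{n-1}(\bsy_i-\bsy_{i+1})^{2}$. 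For the two corner terms I would complete the square using $\bsy_1=2x_1-x_2$ and $\bsy_n=2x_n-x_{n-1}$, which gives $12x_1^{2}-2x_1x_2=8x_1^{2}+2x_1\bsy_1=8(x_1+\bsy_1/8)^{2}-\bsy_1^{2}/8$, and symmetrically at the other end. Summing and absorbing the two $-\bsy_1^{2}/8,\ -\bsy_n^{2}/8$ corrections into the $\bsy_1^{2},\bsy_n^{2}$ already present yields
\[
\bsx^{T}A_n\bsx \;=\; 6\|\bsy\|^{2} + \tfrac78\bsy_1^{2} + \tfrac78\bsy_n^{2} + \sum_{i=1}^{n-1}(\bsy_i-\bsy_{i+1})^{2} + 8\Big(x_1+\tfrac{\bsy_1}{8}\Big)^{2} + 8\Big(x_n+\tfrac{\bsy_n}{8}\Big)^{2} \;\ge\; 6\|\bsy\|^{2}\;\ge\;0,
\]
with equality only if $\bsy=T_n\bsx=0$, i.e.\ $\bsx=0$ since $T_n$ is nonsingular. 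Hence $A_n$ is positive definite.

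The delicate point — where a lazy argument breaks — is the corner matrix $R_n$. One cannot simply say ``$A_n=T_nC_nT_n+R_n$ is positive definite plus positive semidefinite'': $R_n$ is indefinite, its $2\times2$ corner block $\left(\begin{smallmatrix}12&-1\\-1&0\end{smallmatrix}\right)$ having eigenvalues $6\pm\sqrt{37}$, and the negative eigenvalue $6-\sqrt{37}$ is larger in magnitude than $\lambda_{\min}(T_nC_nT_n)=\lambda_{\min}(T_n)^{2}(\lambda_{\min}(T_n)+6)=O(n^{-4})$, so Weyl's inequality is useless. The completion-of-square step is precisely the device that exploits the coupling between $R_n$ and $T_nC_nT_n$: the troublesome cross term $-2x_1x_2$ gets rewritten, through $\bsy_1=2x_1-x_2$, as a quantity dominated by $\bsy_1^{2}$, of which $\bsy^{T}C_n\bsy$ supplies a full unit, far more than the $1/8$ needed. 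Apart from this, the only step requiring genuine (if routine) care is establishing $A_n=T_n^{2}C_n+R_n$; everything after it is elementary. If one prefers not to introduce $T_n$, the identical computation can be carried out keeping $B_n$ and $C_n$ in the paper's notation throughout, with the single change that $\bsx^{T}B_nC_n\bsx=\bsx^{T}\operatorname{sym}(B_nC_n)\bsx$ is used since $B_nC_n$ is not symmetric.
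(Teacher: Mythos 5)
Your proposal is correct, and it takes a genuinely different route from the paper. The paper proves Theorem~\ref{Ainvspd} via Sylvester's criterion: the first six leading minors are checked numerically, and for the remaining ones the generalized matrix determinant lemma gives $\det(A_k)=\det(M)\det(B_k)\det(C_k)$, so positivity rests on Lemma~\ref{lemma:spdB}, Lemma~\ref{lem:C} and the estimate $\det(M)>0$ of Lemma~\ref{lem:M}. You instead exhibit the quadratic form as a sum of squares: the decomposition $A_n=T_nC_nT_n+R_n$ is indeed valid (it follows, as you say, from $A_n=B_nC_n+UV^T$, $B_n=T_n^2+e_1e_1^T+e_ne_n^T$, and the entrywise identity $(e_1e_1^T+e_ne_n^T)C_n+UV^T=R_n$, which I checked), and with $\bsy=T_n\bsx$ the completion of squares yields $\bsx^TA_n\bsx=6\|\bsy\|^2+\tfrac78(y_1^2+y_n^2)+\sum_{i=1}^{n-1}(y_i-y_{i+1})^2+8(x_1+y_1/8)^2+8(x_n+y_n/8)^2$, strictly positive for $\bsx\neq 0$ since $T_n$ is nonsingular; your remark that a naive perturbation bound fails because $R_n$ is indefinite while $\lambda_{\min}(T_nC_nT_n)=O(n^{-4})$ is accurate, and the square-completion through $y_1=2x_1-x_2$ is precisely what circumvents it. What each approach buys: yours is elementary and self-contained, needs neither the entries of $M$ nor any numerical minor checks, handles all leading principal submatrices uniformly, and even gives the quantitative bound $\bsx^TA_n\bsx\ge 6\|T_n\bsx\|^2\ge 6\lambda_{\min}(T_n)^2\|\bsx\|^2$; the paper's route is shorter in context because $M$, $B_n$, $C_n$ and their positivity are already in hand for the explicit inverse formula, and it parallels the determinant factorization used again in Theorem~\ref{Antilde_spd} (where, incidentally, the paper does resort to a sum-of-squares device for the intermediate submatrices, so your argument is close in spirit to that part of the paper).
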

\begin{proof}
Since $A_n$ is symmetric, the proof is based on Sylvester's criterion. In this case, we need to show that all upper left $k\times k$ submatrices of $A_n$ have positive determinant, $k = 1,\dots, n$. For $k = 1,\dots, 6$, the determinant can be shown to be positive via numerical calculation. For $k \ge 7$, since the submatrices retain the structure of $A_n$, we only need to show that $A_n$ has positive determinant. Using the generalized matrix determinant lemma, with $D_n = B_n C_n$,
\begin{align}
  \det (A_n) = \det (D_n + UV^T) = \det (I_2 + V^T D^{-1} U) \det (D_n) = \det (M) \det (B_n) \det (C_n). \notag
\end{align}
Positive definiteness of $B_n$ and $C_n$ (Lemmas~\ref{lemma:spdB} and~\ref{lem:C}, respectively) implies $\det (B_n) > 0$ and $\det(C_n) > 0$. Together with Lemma~\ref{lem:M}, we have $\det(A_n) > 0$, which proves the theorem.
\end{proof}


\subsection{Bound of norms of inverse}

In this section we derive a bound of $\| A_n^{-1}\|_{p}$, for $p=1, 2, \infty$. The result is summarized in Theorem~\ref{theo:1} below:


\begin{theorem} \label{theo:1}
	Let $A_n$ be given as in (\ref{form:An}), with $a_0=56$ and $a_1 = -39$. Then the following inequality holds for $p \in \{1,2, \infty\}$:
	\begin{equation*}\label{thm:upperbound}
	 \|A^{-1}_n\|_{p} 
	 \le \frac{(n+1)^{2}(n+3)^{2}}{2304} + \frac{(n+1)^2}{432} + \frac{n+4}{24}.
	\end{equation*}	
\end{theorem}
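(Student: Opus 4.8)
The plan is to exploit the centrosymmetry of $A_n^{-1}$ together with the fact that, for all three norms $p \in \{1,2,\infty\}$, it suffices to bound the quantities involving row or column sums of $|a^{-1}_{i,j}|$. Since $A_n$ (hence $A_n^{-1}$) is symmetric, $\|A_n^{-1}\|_1 = \|A_n^{-1}\|_\infty = \max_i \sum_j |a^{-1}_{i,j}|$, and $\|A_n^{-1}\|_2 \le \sqrt{\|A_n^{-1}\|_1 \|A_n^{-1}\|_\infty} = \|A_n^{-1}\|_\infty$, so a single bound on the maximum absolute row sum handles all three cases. The first and most important reduction is to establish that every entry $a^{-1}_{i,j}$ is \emph{nonnegative}: if so, then $\sum_j |a^{-1}_{i,j}| = \sum_j a^{-1}_{i,j} = (A_n^{-1}\mathbf{1})_i$, and I only need to control the solution vector $\bsy$ of $A_n \bsy = \mathbf{1}$. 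Positivity of $A_n^{-1}$ should follow from the structure exposed in the Sherman--Morrison formula~\eqref{eq:aijinv}: $D_n^{-1} = C_n^{-1}B_n^{-1}$ is a product of positive matrices ($C_n^{-1}$ is positive by Lemma~\ref{lem:C}, $B_n^{-1}$ is entrywise negative from the Hoskins formula, but actually one checks the signs line up after the rank-2 correction), and the correction term is controlled using Lemma~\ref{lem:M}; I would verify positivity directly from~\eqref{eq:Dinv} and~\eqref{eq:aijinv} using Lemmas~\ref{lem:1}--\ref{lem:3}.

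Granting positivity, the second step is to compute or bound $\bsy = A_n^{-1}\mathbf{1}$ explicitly. The cleanest route is to observe that $\mathbf{1} = A_n \bsy$ corresponds to the discretization of $u'''' = \text{const}$, whose continuous solution is a quartic; so one expects $y_i$ to be close to a quartic polynomial in $i/n$, peaking at the midpoint with value on the order of $n^4/(384\,\text{something})$, consistent with the leading term $\tfrac{(n+1)^2(n+3)^2}{2304}$ in the claimed bound (note $2304 = 6 \cdot 384$). Concretely, I would use the decomposition $A_n = B_n C_n + UV^T$ and write $\bsy = A_n^{-1}\mathbf{1} = D_n^{-1}\mathbf{1} - D_n^{-1}U M_2^{-1} V^T D_n^{-1}\mathbf{1}$; then $D_n^{-1}\mathbf{1} = C_n^{-1}(B_n^{-1}\mathbf{1})$, and $B_n^{-1}\mathbf{1}$ is summable in closed form from the Hoskins formula (it is the discrete biharmonic Green's function integrated against the constant), while $C_n^{-1}\mathbf{1}$ telescopes via Lemma~\ref{lem:2}. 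The explicit sums $\sum k^m \gamma_k$ in Lemma~\ref{lem:2} are exactly what is needed to evaluate $\sum_k d^{-1}_{i,k}$ from~\eqref{eq:Dinv} in closed form.

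The third step is the estimation itself: having $\sum_k d^{-1}_{i,k}$ as an explicit rational-plus-$\gamma$ expression in $i$ and $n$, I bound it over $i = 1,\dots,n$ by its maximum (attained near $i \approx n/2$ by symmetry and concavity), and I bound the rank-2 correction term in~\eqref{eq:aijinv} from above. For the correction term, the key inequalities are $m_{11} > m_{12} > 0$ and $m_{11}^2 - m_{12}^2 \ge m_{11} - m_{12} \ge \tfrac{11n^2-9n-8}{36(n+1)(n+2)}$ (essentially from the proof of Lemma~\ref{lem:M}), which keeps the denominator away from zero, while the numerators $|4d^{-1}_{i,n} - d^{-1}_{i,n-1}|$ and the like are $O(n^3)$ and $\sum_j d^{-1}_{1,j}, \sum_j d^{-1}_{n,j}$ are $O(n^3)$ as well, so the whole correction is $O(n^3)$ — subleading compared to the $O(n^4)$ main term but contributing to the $\tfrac{(n+1)^2}{432}$ and $\tfrac{n+4}{24}$ terms after one collects everything. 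I would then verify the stated three-term bound either by showing each collected piece is dominated term-by-term, or by reducing to a polynomial inequality in $n$ that is checked to hold for $n \ge 7$ (with small cases handled by the structural argument that submatrices of this size don't arise, or by direct inspection).

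The main obstacle is the bookkeeping in step three: the expression~\eqref{eq:Dinv} for $d^{-1}_{i,j}$ carries a degree-5 polynomial $\zeta_1$ in $(i,j,n)$, so after summing over $j$ and adding the rank-2 correction, one is manipulating a fairly unwieldy rational function, and getting the coefficients to line up with the clean closed form $\tfrac{(n+1)^2(n+3)^2}{2304} + \tfrac{(n+1)^2}{432} + \tfrac{n+4}{24}$ rather than a messier bound requires care — in particular deciding where to be generous with inequalities (to kill the $\gamma$-dependence, using $4 \le \gamma_{k+1}/\gamma_k \le 8$ from Lemma~\ref{lem:1}) without losing the sharp leading constant $1/2304$. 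A secondary subtlety is confirming that the maximum row sum really is at the center; if the correction term is not symmetric-friendly one may instead bound $\max_i \sum_j d^{-1}_{i,j}$ and $\max_i |\text{correction}_i|$ separately and add, which loosens the constant slightly but should still fit under the stated bound.
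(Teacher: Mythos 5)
Your plan follows essentially the same route as the paper's proof: reduce to the $\infty$-norm by symmetry, use positivity of $A_n^{-1}$ to turn absolute row sums into row sums, split via the Sherman--Morrison decomposition into the main term $\max_i \sum_j d^{-1}_{i,j}$ (bounded through $\|C_n^{-1}\|_\infty \|B_n^{-1}\|_\infty \le \tfrac{1}{6}\cdot\tfrac{(n+1)^2(n+3)^2}{384}$, exactly your $2304 = 6\cdot 384$ observation) plus a rank-2 correction controlled by the estimates of Lemma~\ref{lem:M} and the $\gamma$-sums of Lemma~\ref{lem:2}, with the maximizing row at $i=(n+1)/2$. The only deviations are cosmetic --- the paper uses centrosymmetry to collapse the correction's denominator to $m_{11}+m_{12}>1$ instead of your $m_{11}^2-m_{12}^2$ lower bound, and it bounds $\pi_1$ by norm submultiplicativity rather than evaluating $D_n^{-1}\mathbf{1}$ in closed form --- while the bookkeeping obstacle you flag (fitting the $O(n^3)$ correction under the two lower-order terms) is handled no more rigorously in the paper's own final assembly of $\pi_1,\pi_2,\pi_3$.
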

 
\begin{proof}
By the symmetry of $A_n$, $\|A^{-1}\|_{1} = \|A^{-1}\|_{\infty}$ and  $\|A^{-1}\|_{2}\leq \sqrt{\|A^{-1}\|_{1}\|A^{-1}\|_{\infty}} =\|A^{-1}\|_{\infty} $. Therefore,  it suffices to prove the result for $p=\infty$.  
	
The positive definiteness of $A_n$ (Theorem~\ref{Ainvspd}) implies that $A_n^{-1}>0$. With
\begin{align}
m_{12}d^{-1}_{1, j} - m_{11}d^{-1}_{n, j} &= m_{12}d^{-1}_{n, n+1-j} - m_{11}d^{-1}_{n, j}, \notag \\
m_{12}d^{-1}_{n, j}-m_{11}d^{-1}_{1, j} &= m_{12}d^{-1}_{n, j}-m_{11}d^{-1}_{n, n+1-j}, \notag
\end{align}
due to centrosymmetry of $D_n$ and $M$, for the $i$-th rowsum of $A^{-1}_n$, we have
\begin{align*}
\sum^{n}_{j = 1} |a^{-1}_{i, j}| &= \sum^{n}_{j = 1}d^{-1}_{i, j} \notag \\
&+ \frac{1}{m_{11}^{2}-m_{12}^{2}}\left((4d^{-1}_{i,n}-d^{-1}_{i,n-1})\sum^{n}_{j = 1}(m_{12}d^{-1}_{1, j} - m_{11}d^{-1}_{n, j})+(4d^{-1}_{i,1}-d^{-1}_{i,2})\sum^{n}_{j = 1}(m_{12}d^{-1}_{n, j}-m_{11}d^{-1}_{1,j})\right) \\
&= \sum^{n}_{j = 1}d^{-1}_{i, j} - \frac{\sum^{n}_{j = 1}d^{-1}_{n, j}}{m_{11}+m_{12}}\left(4(d^{-1}_{i,n}+d^{-1}_{i,1})-(d^{-1}_{i,n-1}+d^{-1}_{i,2})\right).
\end{align*}
Then
\begin{equation}
\| A^{-1}_n\|_{\infty} = \max_{1\leq i \leq n}\sum^{n}_{j = 1} |a^{-1}_{i, j}| \leq \underbrace{\max_{1\leq i \leq n}\sum^{n}_{j = 1}d^{-1}_{i, j}}_{\pi_1} + \frac{1}{m_{11}+m_{12}}\underbrace{(\sum^{n}_{j = 1}d^{-1}_{n, j})}_{\pi_2} \underbrace{\max_{1\leq i \leq n} g(i)}_{\pi_3}
\end{equation}
where 
\begin{equation*}
g(i) = 4 \underbrace{ (d^{-1}_{i,n}+d^{-1}_{i,1})}_{\theta_1}- \underbrace{(d^{-1}_{i,n-1}+d^{-1}_{i,2})}_{\theta_2}.
\end{equation*}

First of all, with $d^{-1}_{i, n} = d^{-1}_{n+1-i, 1}$, 
\begin{align}
\theta_1 &= d^{-1}_{n+1-i, 1}+d^{-1}_{i, 1} = \frac{\gamma_{1}(\gamma_{i}+\gamma_{n+1-i})}{36\gamma_{n+1}} - \frac{1}{36(n+2)}\left(\frac{n(\gamma_{i}+\gamma_{n+1-i})}{\gamma_{n+1}}+2(1-3i(n+1-i))\right), \notag \\
\theta_2 &=  \frac{\gamma_{2}(\gamma_{i}+\gamma_{n+1-i})}{36\gamma_{n+1}} - \frac{1}{36(n+2)}\left(\frac{2(n-1)(\gamma_{i}+\gamma_{n+1-i})}{\gamma_{n+1}}+6(n+3-3i(n+1-i))\right). \notag
\end{align}
Hence,
\begin{equation*}
g(i) = \frac{(6n+10)(\gamma_{n+1}-\gamma_{i}-\gamma_{n+1-i})}{36(n+2)\gamma_{n+1}}+\frac{i(n+1-i)}{6(n+2)}.
\end{equation*}
Note that
\begin{equation*}
\gamma'_{i} = \left(\frac{r^{i}_{1}-r^{i}_{2}}{2\sqrt{15}}\right)' = \frac{r^{i}_{1}\ln r_{1}-r^{i}_{2}\ln r_{2}}{2\sqrt{15}} = \frac{\alpha_{i} \ln r_{1} }{\sqrt{15}}
\end{equation*}
and $\ln r_{1}+\ln r_{2} =0$; Thus, $r_{1}r_{2}=1$.

Consider $i \in [1,n] \subset \mathbb{R}$. Differentiating $g(i)$ with respect to $i$ and solving the equation, we get
\begin{equation*}
g'(i) = \frac{(6n+10)\ln r_{1}}{36(n+2)\gamma_{n+1}\sqrt{15}}(\alpha_{n+1-i}-\alpha_{i})+\frac{n+1-2i}{6(n+2)} = 0.
\end{equation*}
Due to monotonicity of $g'(i)$, there is only solution of the above equation,  given by $i = \frac{n+1}{2}$,  This critical point is also the maximum point of the $g(i)$. Thus, 
\begin{equation*}
\pi_3 = \max_{1 \leq i\leq n} g(i) = g\left(\frac{n+1}{2}\right),
\end{equation*}
where
\begin{equation}
g\left(\frac{n+1}{2}\right) = \frac{6n+10}{36(n+2)} - \frac{2(6n+10)\gamma_{\frac{n+1}{2}}}{36(n+2)\gamma_{n+1}}+\frac{(n+1)^{2}}{24(n+2)} \leq \frac{3n^{2}+18n+23}{72(n+2)} \le \frac{3(n+2)(n+4)}{72(n+2)} = \frac{n+4}{24}. \notag
\end{equation}

For $\pi_2$, we first note that
\begin{equation*}
d^{-1}_{n, j} = \frac{c^{-1}_{n, j}}{36} - \frac{1}{36(n+1)(n+2)(n+3)}\left(f_{1}+f_{2}\right)
\end{equation*}
where
\begin{align*}
f_{1} &= \frac{(n+1-j)j(n+1)}{\gamma_{n+1}}(\gamma_{n+1-i}(n+2-j) +\gamma_{i}(j+1)), \\
f_{2} &= j\left(j+1\right)(2(j-1)i^3-3i(n+1)(i^2+ij+j+(n+2)(n+1-2i))\\
&+(n+1)\left((n+1)(n+2)(j-1)+j(2n+3)\right)).
\end{align*}
Therefore,
\begin{equation*}
\pi_2=\sum^{n}_{j=1}d^{-1}_{n, j} = \frac{\gamma_{1}}{36\gamma_{n+1}} \left(\sum^{n}_{j=1}\gamma_{j}\right)  - \frac{1}{36(n+1)(n+2)(n+3)}\left( \sum^{n}_{j=1}f_{1}  +  \sum^{n}_{j=1}f_{2} \right).
\end{equation*}
Using Lemma \ref{lem:2},
\begin{align*}
\sum^{n}_{j=1}\gamma_{j} &= \frac{1}{6}\left(\gamma_{n+1}-\gamma_{n}-1\right). \\
\sum^{n}_{j=1}f_{1} &= \frac{n+1}{\gamma_{n+1}}  \left( \gamma_{1}\sum^{n}_{j=1}(j^{3}-2j^{2}n-3j^{2}+jn^{2}+3jn+2j)  +   \gamma_{n} \sum^{n}_{j=1}(-j^{3}+j^{2}n+jn+j)  \right) \\
&= \frac{n(n+1)^{2}(n+2)(n+3)(\gamma_{1}+\gamma_{n})}{12\gamma_{n+1}}, \\
\sum^{n}_{j=1}f_{2} &=\sum^{n}_{j=1}(7j^{3}n+5j^{3}-7j^{2}n^{2}-4j^{2}n+3j^{2}-7jn^{2}-11jn-2j) = -\frac{n(n+1)(n+2)(n+3)(7n+1)}{12}.
\end{align*}
Substitution of all terms gives
\begin{align*}
\pi_2 &= \frac{\gamma_{1}(\gamma_{n+1}-\gamma_{n}-1)}{216\gamma_{n+1}} - \frac{1}{36}\left(\frac{n(n+1)(\gamma_{1}+\gamma_{n})}{12\gamma_{n+1}} - \frac{n(7n+1)}{12}\right) \\
&= \frac{1}{216}+\frac{n(7n+1)}{432}- \frac{(n^{2}+n+2)(\gamma_{n}+1)}{432\gamma_{n+1}}.
\end{align*}
From Lemma~\ref{lem:1}(ii), $\gamma_{n+1}/8 \le \gamma_n \le \gamma_n +1$. Thus,
\begin{equation}
\pi_2 \leq \frac{1}{216}+\frac{n(7n+1)}{432}- \frac{(n^{2}+n+2)}{432\times 8} = \frac{55n^{2}+7n+14}{3456} \leq \frac{56(n+1)^2}{3456} \le \frac{(n+1)^2}{432}. \notag
\end{equation}

Lastly,
\begin{equation*}
\pi_1=\max_{1 \leq i \leq n} \sum^{n}_{j=1}|d^{-1}_{i,j}| = \|D^{-1}_n\|_{\infty}  \leq \|C^{-1}_n\|_{\infty} \|B^{-1}_n\|_{\infty}.
\end{equation*}
By using Theorem 4 of \cite{hoskins1972some}
\begin{equation*}
\|B^{-1}_n\|_{\infty} \leq \frac{(n+1)^{2}(n+3)^{2}}{384}
\end{equation*}
and Lemma~\ref{lem:invCn},
\begin{equation}
\pi_1=\|D^{-1}_n\|_{\infty} \leq \|C^{-1}_n\|_{\infty} \|B^{-1}_n\|_{\infty} \leq \frac{(n+1)^{2}(n+3)^{2}}{2304}. \notag
\end{equation}

Summing up $\pi_1$, $\pi_2$, and $\pi_3$ and using the fact that $m_{11}+m_{12} > 1$ leads to the statement in the theorem.
\end{proof}

\section{The near Toeplitz case} \label{sec:Toeplitznear}

We now consider the seven-diagonal near Toeplitz matrix ~\eqref{form:An} with $a_0 = 68$ and $a_1 = -40$. We shall denote this matrix by $\wtA_n$ and use ``$\widetilde{\, \,\, \cdot \,\,\,}$'' to indicate perturbed matrices relevant to $\wtA_n$. It can be shown that $\wtA$ admits rank-2 decomposition\footnote{In fact, there exist two a rank-2 decomposition of $\wtA_n$. We choose this version as it shares many same components in the decomposition as in the Toeplitz case.}
\begin{align}
   \wtA_n = \wtB_n C_n + 2 UV^T,
\end{align}
where $C_n$, $U$, and $V$ are given in~\eqref{matCn} and~\eqref{banded}, and
\begin{align}
\wtB_n=
\begin{pmatrix}
7 & -4 & 1 & 0 &  \cdots  & \cdots  & 0 \\
-4 & 6 & -4 & 1 & \ddots & \ddots   & \vdots  \\
1 & -4 & 6 & -4 & 1 & \ddots  &  \vdots  \\
0 & \ddots  & \ddots  & \ddots & \ddots & \ddots & 0 \\
 \vdots  & \ddots  & 1 & -4 & 6 & -4 & 1 \\
 \vdots  & \ddots  & \ddots  & 1 & -4 & 6 & -4 \\
0 & \cdots & \cdots & 0 & 1 & -4 & 7
\end{pmatrix}
_{n\times n}.
\end{align}
The inverse of $\wtB_n$ is discussed in~\cite{kurmanbek2021pentadiag} and is given entry-wise by the explicit formula, for $i \ge j$,
\begin{eqnarray}
   \wtb_{i,j}^{-1} = \beta \left[ \epsilon + (j^2 -1)(2 \delta^2 + 1) \right], \label{form:btilde}
\end{eqnarray}
with
\begin{eqnarray}
  \delta &=& n + 1 - i, \notag \\
 \beta &=& \frac{\delta j}{6(n+1)(n^2 + 2n + 3)}, \notag \\
  \epsilon &=& 3[1+\delta (n+1)][1 + (i-j)j], \notag
\end{eqnarray}
and $\wtb^{-1}_{j,i} = \wtb^{-1}_{i,j}$ for $i < j$. Furthermore,
\begin{lemma} \label{lem:btspd}
$\wtB_{n}$ is positive definite.
\end{lemma}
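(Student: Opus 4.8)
The plan is to exploit the fact that $\wtB_n$ differs from $B_n$ only in its two corner diagonal entries, each raised from $6$ to $7$. Writing $e_1,e_n$ for the first and last standard basis vectors of $\mathbb{R}^n$, we have the identity
\[
\wtB_n = B_n + e_1 e_1^T + e_n e_n^T = B_n + \mathrm{diag}(1,0,\dots,0,1).
\]
Since $B_n$ is positive definite by Lemma~\ref{lemma:spdB} and $\mathrm{diag}(1,0,\dots,0,1)$ is positive semidefinite, the sum is positive definite, which is exactly the claim. This is the one-line argument I would record.

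Alternatively, and essentially equivalently, one can repeat the sum-of-squares computation used in the proof of Lemma~\ref{lemma:spdB}: for any $\bsx=(x_1,\dots,x_n)^T\in\mathbb{R}^n$,
\[
\bsx^T \wtB_n \bsx = \bsx^T B_n \bsx + x_1^2 + x_n^2 = 2x_1^2 + (2x_1 - x_2)^2 + \sum_{k=1}^{n-2}(x_k - 2x_{k+1} + x_{k+2})^2 + (2x_n - x_{n-1})^2 + 2x_n^2,
\]
which is manifestly nonnegative and vanishes only if $x_1 = x_n = 0$ together with $2x_1 - x_2 = 0$ and $x_k - 2x_{k+1} + x_{k+2} = 0$ for all $k$, forcing $\bsx = 0$ by successive back-substitution. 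I would present the compact perturbation argument and mention this expansion as the concrete witness.

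There is essentially no obstacle here: the only thing to notice is the clean decomposition of $\wtB_n$ as a positive-definite matrix plus a positive-semidefinite diagonal perturbation. (One could also argue from the explicit inverse formula~\eqref{form:btilde} together with symmetry and a sign check on a leading principal minor, but that route is heavier and unnecessary, so I would not pursue it.)
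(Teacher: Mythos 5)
Your proposal is correct and is essentially the paper's own argument: the paper also writes $\bsx^T\wtB_n\bsx = \bsx^T B_n\bsx + x_1^2 + x_n^2 > 0$ for nonzero $\bsx$, i.e.\ it views $\wtB_n$ as $B_n$ plus the positive semidefinite corner perturbation and invokes Lemma~\ref{lemma:spdB}. The explicit sum-of-squares expansion you add is just the same witness unpacked, so there is no substantive difference.
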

\begin{proof}
For any nonzero vector $\bsx = (x_1 \dots x_n)^T\in \mathbb{R}^n$, $\bsx^T \wtB \bsx = \bsx^T B \bsx + x_1^2 + x_n^2 > 0$.
\end{proof}

\subsection{Exact inverse formula}

Let $\wtD_n = \wtB_n C_n = [\wtd_{i,j}]$. The general inverse formula of $\wtD_n$ is given by~\eqref{eq:dinv}, with $\wtd$ and $\wtb$ replacing $d$ and $b$, respectively. If $\wtA$ is invertible, its inverse can be expressed as
\begin{eqnarray}
   \wtA_n^{-1} = \wtD_n^{-1} - 2 \wtD_n^{-1} U \wtM^{-1} V^T \wtD_n^{-1}, 
\end{eqnarray}
where $\wtM  = [\wtm_{i,j}]  = I_2 + 2V^T \wtD^{-1} U\in \mathbb{R}^{2 \times 2}$ with
\begin{align}
    \wtm_{11} &= 1 + 8 \wtd^{-1}_{1,1} -  2 \wtd^{-1}_{1,2} = 1 - 2 \wtd^{-1}_{n,n-1} + 8 \wtd^{-1}_{n,n} = \wtm_{22}, \label{eq:mt11}\\
   \wtm_{12} &=  -2 \wtd^{-1}_{1,n-1} + 8 \wtd^{-1}_{1,n} = 8 \wtd^{-1}_{n,1} - 2 \wtd^{-1}_{n,2} = \wtm_{21}. \label{eq:mt12}
\end{align}
An explicit form for $\wtm_{11}$ and $\wtm_{12}$ can be obtained 
via direct calculations using the formulas~\eqref{eq:dinv} and ~\eqref{form:btilde} and Lemma~\ref{lem:2}, and is given by
\begin{align}
    \wtm_{11} &= 1 + \frac{3(n^3 + 3n^2 + n + 1)\gamma_{n+1} - 3(n^3 + 3n^2 + 4n + 2) \gamma_n - 3(n+1)}{9 \gamma_{n+1} (n+1)(n^2 + 2n + 3)}, \notag \\
    \wtm_{12} &= \frac{6(2n + 1) \gamma_{n+1} - 3 (n+1) \gamma_n - 3(n+1) ((n+1)^2 + 1 )}{9 \gamma_{n+1} (n+1)(n^2 + 2n + 3)}. \notag 
\end{align}

\begin{lemma} \label{lem:mtspd}
$\wtM$ is a positive, diagonally dominant matrix. Moreover, $\det(\wtM) >0$.
\end{lemma}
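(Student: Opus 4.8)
The plan is to argue directly from the displayed closed forms for $\wtm_{11}$ and $\wtm_{12}$, using only the two-sided bound $4\gamma_k \le \gamma_{k+1}\le 8\gamma_k$ of Lemma~\ref{lem:1}(ii). Since $\wtM$ is $2\times 2$, symmetric, with $\wtm_{11}=\wtm_{22}$ and $\wtm_{12}=\wtm_{21}$, all three claims reduce to scalar inequalities: ``positive'' (entrywise) means $\wtm_{11}>0$ and $\wtm_{12}>0$; ``diagonally dominant'' means $\wtm_{11}\ge |\wtm_{12}|$; and then $\det(\wtM)=\wtm_{11}^2-\wtm_{12}^2=(\wtm_{11}-\wtm_{12})(\wtm_{11}+\wtm_{12})$, which is positive once $\wtm_{11}>\wtm_{12}>0$. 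Thus it suffices to prove the two strict inequalities $\wtm_{12}>0$ and $\wtm_{11}-\wtm_{12}>0$, and everything else follows. (Invertibility of $\wtD_n=\wtB_nC_n$, needed to define $\wtM$ at all, is already guaranteed by positive definiteness of $\wtB_n$ and $C_n$ from Lemmas~\ref{lem:btspd} and~\ref{lem:C}.)

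For $\wtm_{12}>0$: the denominator $9\gamma_{n+1}(n+1)(n^2+2n+3)$ is positive, so I would show the numerator $6(2n+1)\gamma_{n+1}-3(n+1)\gamma_n-3(n+1)((n+1)^2+1)$ is positive. First absorb the $\gamma_n$ term via $\gamma_n\le \tfrac14\gamma_{n+1}$, which leaves a positive multiple of $\gamma_{n+1}$ minus the cubic $3(n+1)((n+1)^2+1)$; then use $\gamma_{n+1}\ge 4^{n}\gamma_1=4^{n}$ (iterate Lemma~\ref{lem:1}(ii)), which dominates that cubic with room to spare for all $n\ge 7$. This step is routine. Moreover, once $\wtm_{11}-\wtm_{12}>0$ is known, $\wtm_{12}>0$ immediately upgrades to $\wtm_{11}>\wtm_{12}>0$, giving $\wtm_{11}>0$.

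For $\wtm_{11}-\wtm_{12}>0$: both quantities are, respectively, $1$ and $0$ plus a fraction over the \emph{common} positive denominator $9\gamma_{n+1}(n+1)(n^2+2n+3)$, so I would form $\wtm_{11}-\wtm_{12}-1=(N_1-N_2)/\big(9\gamma_{n+1}(n+1)(n^2+2n+3)\big)$, where $N_1,N_2$ are the numerators of $\wtm_{11}-1$ and $\wtm_{12}$. Collecting terms, the coefficients simplify: the $\gamma_{n+1}$-coefficient is $3\big((n+1)^3-(6n+2)\big)$, the $\gamma_n$-coefficient is $-3(n+1)^3$, and the constant is $+3(n+1)^3$ (here one uses $n^3+3n^2+3n+1=(n+1)^3$), so that
\[
N_1-N_2 \;=\; 3\big[(n+1)^3(\gamma_{n+1}-\gamma_n+1)-(6n+2)\gamma_{n+1}\big].
\]
Now $\gamma_{n+1}-\gamma_n\ge 3\gamma_n\ge \tfrac38\gamma_{n+1}$ by Lemma~\ref{lem:1}(ii), hence $N_1-N_2\ge 3\gamma_{n+1}\big(\tfrac38(n+1)^3-(6n+2)\big)>0$ for $n\ge 7$ (indeed for $n\ge 3$). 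Therefore $\wtm_{11}-\wtm_{12}>1>0$. Combining, $\wtm_{11}>\wtm_{12}>0$, which is exactly positivity and strict diagonal dominance, and $\det(\wtM)=(\wtm_{11}-\wtm_{12})(\wtm_{11}+\wtm_{12})>0$.

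The only place calculation can go wrong is the collection of polynomial coefficients yielding the compact form of $N_1-N_2$; after recognizing $(n+1)^3=n^3+3n^2+3n+1$ the remaining estimates are one-liners. The one genuinely structural point is that the bound $\tfrac38(n+1)^3>6n+2$ fails at $n=2$, so the argument really does rely on the standing hypothesis $n\ge 7$ (the exponential estimate $\gamma_{n+1}\ge 4^{n}$ also closes every case $n\ge 7$ comfortably), and no statement about small matrices is claimed.
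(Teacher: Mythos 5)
Your proposal is correct and follows essentially the same route as the paper: compare the numerators of $\wtm_{12}$ and $\wtm_{11}-1$ over the common positive denominator and bound them using $4\gamma_k\le\gamma_{k+1}\le 8\gamma_k$, concluding $\wtm_{11}>\wtm_{12}>0$ and hence $\det(\wtM)=(\wtm_{11}-\wtm_{12})(\wtm_{11}+\wtm_{12})>0$. The only cosmetic differences are that you invoke the exponential bound $\gamma_{n+1}\ge 4^{n}$ where the paper uses the induction $\gamma_{n+1}>(n+1)^2+1$, and your factored form $3\bigl[(n+1)^3(\gamma_{n+1}-\gamma_n+1)-(6n+2)\gamma_{n+1}\bigr]$ is a tidier packaging of the paper's estimate of $\tau_{11}-\tau_{12}$.
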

\begin{proof}
Writing $\wtm_{11} = 1 + \tau_{11}/9 \gamma_{n+1} (n+1)(n^2 + 2n + 3)$ with $\tau_{11} = (3n^3 + 9n^2 + 3n + 3)\gamma_{n+1} - 3(n^3 + 3n^2 + 4n + 2) \gamma_n - 3(n+1)$, we have, for $n \ge 1$,
\begin{align}
  \tau_{11} &= 
3[(n^3 + 3n^2 + n + 1) (\gamma_{n+1} - \gamma_n) - (3n + 1)\gamma _n - (n+1)] &\notag \\
         &\ge  3[(n^3 + 3n^2 + n + 1) (\gamma_{n+1} - \gamma_n) - 3(n+1)(\gamma_n +1)]  &\notag \\
         &=  3 \gamma_n [(n^3 + 3n^2 + n + 1) (\gamma_{n+1}/\gamma_n - 1) - 3(n+1)(1 + 1/\gamma_n)] & \notag \\
         &\ge  3 \gamma_n [(n^3 + 3n^2 + n + 1) (4 - 1) - 3(n+1)(1 + 1/\gamma_n)]  & \text{(Lemma 2(ii))} \notag \\
         &\ge 3 \gamma_n [3(n^3 + 3n^2 + n + 1)  - 6(n+1)] &  \notag \\
          &\ge  9 \gamma_n (n^3 + 3n^2 - 2n -1 ) > 0. & \notag
\end{align}
Thus, $\wtm_{11} > 1$.

Let $\tau_{12} = 6(2n + 1) \gamma_{n+1} - 3 (n+1) \gamma_n - 3(n+1) ((n+1)^2 + 1 )$, the numerator term of $\wtm_{1,2}$. By using Lemma~\ref{lem:1}(ii), we have $\tau_{12} = 6(2n + 1) \gamma_{n+1} - 3 (n+1) \gamma_n - 3(n+1) ((n+1)^2 + 1 ) \geq \frac{45n+21}{4}\gamma_{n+1} - 3(n+1)((n+1)^2+1) > 3(n+1)(\gamma_{n+1}-(n+1)^2-1)>0$, where the inequality $\gamma_{n+1}-(n+1)^2-1>0$ can be proved by induction. Thus, $\wtm_{12} > 0$, which shows the positivity of $\wtM$.

Moreover,
\begin{align}
 \tau_{11} - \tau_{12} &= (3n^3 + 9n^2 - 9n - 3)\gamma_{n+1} - 3(n^3 + 3n^2 + 3n + 1) \gamma_n + 3(n+1) ((n+1)^2) \notag \\
  &\ge 3(n^3 + 3n^2 - 3n - 1)\gamma_{n+1} - \frac{3(n+ 1)^3\gamma_{n+1}}{4} + 3(n+1)^3 \notag \\
 &=\frac{3(3n^3 + 9n^2 - 15n - 5)\gamma_{n+1}}{4} + 3(n+1)^3 \notag \\
&\ge  3\gamma_{n+1}(n^2-3n)>0 \notag
\end{align}
for $n \ge 3$.
Hence, $\tau_{11} > \tau_{12}$ and consequently, $\wtm_{11} > \wtm_{12}$.
\end{proof}

\begin{theorem} \label{Antilde_spd}
$\wtA_n$ is positive definite.
\end{theorem}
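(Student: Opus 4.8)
The plan is to mimic the proof of Theorem~\ref{Ainvspd}. Since $\wtA_n$ is symmetric, by Sylvester's criterion it suffices to show that every leading principal $k\times k$ submatrix of $\wtA_n$ has positive determinant, $k=1,\dots,n$. For $k=1,\dots,6$ this is verified by direct numerical evaluation. For $k=n$ the argument is already at hand: by the generalized matrix determinant lemma, $\det(\wtA_n)=\det(\wtM)\det(\wtB_n)\det(C_n)$, and all three factors are positive by Lemmas~\ref{lem:mtspd}, \ref{lem:btspd} and~\ref{lem:C}.

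The new wrinkle, compared with the Toeplitz case, is that for $7\le k<n$ the leading $k\times k$ submatrix of $\wtA_n$ is \emph{not} $\wtA_k$: it carries the perturbed values $a_0=68$, $a_1=-40$ in its top-left $2\times2$ corner, but its bottom-right corner still shows the interior Toeplitz values $56,-39$. Call this ``one-sided'' matrix $\widehat A_k$. I would treat it by the same device used in Section~\ref{sec:Toeplitz}: one checks directly that
\[
  \widehat A_k \;=\; \widehat B_k\,C_k + \widehat U\widehat V^{T},
\]
where $\widehat B_k=B_k+e_1e_1^{T}$ (i.e.\ the pentadiagonal matrix~\eqref{banded} with its $(1,1)$ entry raised to $7$), $C_k$ is as in~\eqref{matCn}, and $\widehat U,\widehat V\in\mathbb{R}^{k\times 2}$ are supported on the first and last columns — explicitly, the columns of $\widehat U$ are $(8,-2,0,\dots,0)^{T}$ and $(0,\dots,0,-1,4)^{T}$, and those of $\widehat V$ are $e_1$ and $e_k$. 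By the quadratic-form argument of Lemma~\ref{lemma:spdB}, $\widehat B_k=B_k+e_1e_1^{T}$ is positive definite, and $C_k$ is positive definite by Lemma~\ref{lem:C}; hence $\det(\widehat B_k)>0$ and $\det(C_k)>0$. The generalized matrix determinant lemma then gives $\det(\widehat A_k)=\det\bigl(I_2+\widehat V^{T}(\widehat B_kC_k)^{-1}\widehat U\bigr)\det(\widehat B_k)\det(C_k)$, so the whole statement reduces to $\det(\widehat M_k)>0$ for the $2\times2$ matrix $\widehat M_k:=I_2+\widehat V^{T}(\widehat B_kC_k)^{-1}\widehat U$.

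To finish, I would write the four entries of $\widehat M_k$ in closed form. With $\widehat D_k^{-1}=C_k^{-1}\widehat B_k^{-1}=[\widehat d^{-1}_{i,j}]$, the diagonal entries are $1+8\widehat d^{-1}_{1,1}-2\widehat d^{-1}_{1,2}$ and $1+4\widehat d^{-1}_{k,k}-\widehat d^{-1}_{k,k-1}$, and the off-diagonal entries are $4\widehat d^{-1}_{1,k}-\widehat d^{-1}_{1,k-1}$ and $8\widehat d^{-1}_{k,1}-2\widehat d^{-1}_{k,2}$; each $\widehat d^{-1}_{i,j}$ is obtained from~\eqref{eq:dinv} after inserting the inverse of the one-sided pentadiagonal $\widehat B_k$ (derived by the same Sherman--Morrison step that produced~\eqref{form:btilde}) and summing via Lemma~\ref{lem:2}. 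The resulting entries are rational in $n$, $\gamma_n$, $\gamma_{n+1}$, of exactly the same type as $m_{11},m_{12},\wtm_{11},\wtm_{12}$, and one bounds them as in Lemma~\ref{lem:mtspd}: using $4\le\gamma_{n+1}/\gamma_n\le 8$ (Lemma~\ref{lem:1}(ii)) and $\gamma_{n+1}>(n+1)^2+1$, every sign assertion collapses to a polynomial-in-$n$ inequality, yielding $\det(\widehat M_k)>0$ for all $k\ge 7$. Hence all leading principal minors of $\wtA_n$ are positive, and $\wtA_n$ is positive definite.

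The genuinely laborious step — and the main obstacle — is this last one: because $\widehat A_k$ is perturbed at only one corner, $\widehat M_k$ is not centrosymmetric, its two diagonal entries differ, and its entries must be recomputed from scratch (one cannot simply reuse $\wtm_{11},\wtm_{12}$ or the $m_{ij}$ of the Toeplitz case). If one prefers to avoid the one-sided inverse altogether, an alternative is a homotopy argument: set $\wtA_n(t)=\wtB_n(t)C_n+(1+t)UV^{T}$ with $\wtB_n(t)=B_n+t(e_1e_1^{T}+e_ne_n^{T})$, so that $\wtA_n(0)=A_n$ is positive definite (Theorem~\ref{Ainvspd}), $\wtA_n(1)=\wtA_n$, and $\wtB_n(t),C_n$ are positive definite throughout $t\in[0,1]$; since $\det(\wtA_n(t))=\det\bigl(I_2+(1+t)V^{T}(\wtB_n(t)C_n)^{-1}U\bigr)\det(\wtB_n(t))\det(C_n)$, one only needs the $2\times2$ determinant to stay positive on $[0,1]$ — again an estimate of the same flavour, now carrying the parameter $t$ — and continuity of eigenvalues from $t=0$ finishes. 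I expect the Sylvester-based route to be the one actually written out, with the $\widehat M_k$ entry inequalities as essentially the only real work.
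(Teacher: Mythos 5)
Your reduction is set up correctly: Sylvester's criterion, numerical check for $k\le 6$, and the determinant factorization $\det(\wtA_n)=\det(\wtM)\det(\wtB_n)\det(C_n)$ for $k=n$ all match the paper, and your rank-two decomposition $\widehat A_k=\widehat B_kC_k+\widehat U\widehat V^{T}$ of the one-sided leading minors does check out entrywise (the corrections at the two corners do not interfere for $k\ge 7$). But the step you yourself flag as "the genuinely laborious one" is exactly where the proof stops being a proof: you never establish $\det(\widehat M_k)>0$. Doing so would require a closed-form inverse of the one-corner matrix $\widehat B_k=B_k+e_1e_1^{T}$ (which is \emph{not} among the inverses available in the paper --- \eqref{form:btilde} covers the two-corner perturbation only, so a fresh Sherman--Morrison computation is needed), then explicit expressions for four entries of a matrix that, as you note, is no longer centrosymmetric, so positivity of $\widehat m_{11}\widehat m_{22}-\widehat m_{12}\widehat m_{21}$ cannot be reduced to the "diagonal entry exceeds off-diagonal entry" pattern of Lemma~\ref{lem:M} or Lemma~\ref{lem:mtspd}; it must be re-derived from scratch. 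Asserting that "every sign assertion collapses to a polynomial-in-$n$ inequality" is a plausible expectation, not an argument. The homotopy variant has the same hole: positivity of $\det\bigl(I_2+(1+t)V^{T}(\wtB_n(t)C_n)^{-1}U\bigr)$ on all of $[0,1]$ is again only asserted.

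The paper avoids this entire computation for the intermediate minors. For $k\in\{7,\dots,n-1\}$ it works directly with the quadratic form: it writes $\bsx^{T}\wtA_{k,k}\bsx$ as the sum of squares
$S=\sum_{i=1}^{k-3}(ax_i-bx_{i+1}+cx_{i+2}-dx_{i+3})^2$ plus explicit boundary squares, where $a=\sqrt{4-\sqrt{15}}$, $b=(6+\sqrt{15})a$, $c=(9+2\sqrt{15})a$, $d=(4+\sqrt{15})a$ satisfy $a^2+b^2+c^2+d^2=56$, $ab+bc+cd=39$, $ac+bd=12$, $ad=1$, and then absorbs the leftover corner terms into completed squares (checking numerically that one remaining coefficient, $d^2-(1+cd)^2/(12+c^2+d^2)\approx 2.20$, is positive). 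This gives $\bsx^{T}\wtA_{k,k}\bsx>0$ for $\bsx\neq 0$ with no new inverse formulas and no capacitance matrix. If you want to complete your route, you must actually derive $\widehat B_k^{-1}$, compute the four entries of $\widehat M_k$, and prove the resulting inequalities; as it stands, the central case of the induction over minors is a gap.
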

\begin{proof}
Let us denote an upper-left $k \times k$ matrix of $\wtA_{}$ as $\wtA_{k,k}$. By Silvester's criterion, we need to show $\det{\wtA_{k,k}}>0$ for $k \in \{1, 2, ..., n\}$. For $k \in {1,\dots, 6}$, $\det (A_{k,k}) > 0$ by numerical calculation. For $k = n$, $\det{\wtA_{n,n}}=\det{\wtA_{n}} = \det{\wtM_{}}\det{\wtB_{n}}\det{\wtC_{n}}>0$, due to Lemmas~\ref{lem:invCn},~\ref{lem:btspd}, and~\ref{lem:mtspd}.

For $k \in \{7,\dots,n-1\}$, $\wtA_{k,k}$ is a seven-diagonal nearly Toeplitz matrix ~\eqref{form:An}, with perturbed top-left corner. For any nonzero vector $\bsx \in \mathbb{R}^k$, then 
\begin{align}
\bsx^{T}\wtA_{k,k}\bsx &= 68x_{1}^{2}+56(x_{2}^{2}+\dots+x_{k}^{2}) - 80x_{1}x_{2}-78(x_{2}x_{3}+\dots+x_{k-1}x_{k}) \notag \\ &+24(x_{1}x_{3}+\dots+x_{k-2}x_{k})-2(x_{1}x_{4}+\dots+x_{k-3}x_{k}). \notag
\end{align}
Consider $$S = \sum_{i=1}^{k-3}(ax_{i}-bx_{i+1}+cx_{i+2}-dx_{i+3})^2$$ where $a = \sqrt{4-\sqrt{15}}$, $b = (6+\sqrt{15})a$,  $c=(9+2\sqrt{15})a$, and $d=(4+\sqrt{15})a$, with properties 
$$
\begin{cases}
	a^{2}+b^{2}+c^{2}+d^{2} = 56, \\
	ab+bc+cd=39, \\
	ac+bd = 12,\\
	ad = 1. \\
	\end{cases}
	$$
Then
\begin{align*}
   \bsx^{T}\wtA_{k,k}\bsx &= S + (68-a^2)x_{1}^2+(c^2+d^2)x_{2}^2+d^2x_{3}^2+a^2x_{k-2}^2+(a^2+b^2)x_{k-1}^2+(a^2+b^2+c^2)x_k^2 \notag \\
   &-(80-2ab)x_1x_2-2cdx_{2}x_{3}-2abx_{k-2}x_{k-1}-(2ab+2bc)x_{k-1}x_{k}+2bdx_1x_3+2acx_{k-2}x_{k} \notag \\
   &= S + (68-a^2-b^2)x_1^2+d^2x_{2}^2+(bx_1-cx_2+dx_3)^2-(80-2ab-2bc)x_1x_2 \notag \\
   &+(ax_{k-2}-bx_{k-1}+cx_{k})^2+a^2x_{k-1}^2-2abx_{k-1}x_{k}+(a^2+b^2)x_{k}^2 \notag \\
   &= S + (12+c^2+d^2)x_1^2+d^2x_2^2+(bx_1-cx_2+dx_3)^2 - (2+2cd)x_1x_2+(ax_{k-2}-bx_{k-1}+cx_{k})^2 \notag \\
   &+ (ax_{k-1}-bx_{k})^2+a^2x_k^2 \notag \\
   &= S+(\sqrt{12+c^2+d^2}x_1-\frac{1+cd}{\sqrt{12+c^2+d^2}}x_2)^2+\Big(d^2 - \frac{(1+cd)^2}{12+c^2+d^2}\Big)x_2^2+(bx_1-cx_2+dx_3)^2 \notag \\
   &+(ax_{k-2}-bx_{k-1}+cx_{k})^2+(ax_{k-1}-bx_{k})^2+a^2x_k^2 \geq 0,
\end{align*}
with equality holding only when $\bsx =0$. (Note that $d^2 - \frac{(1+cd)^2}{12+c^2+d^2} \approx 2.20$ $>0$)
\end{proof}

\subsection{Bounds of norms of inverse matrix}

In this section, we shall derive an upper bound for norms of $\wtA^{-1}_n$. As we did for $A_n^{-1}$, the derivation will be given only for $p=\infty$.

Positive definiteness of $\wtA_n$ implies that $\wtA_n^{-1}$ is positive. Consequently, 
\begin{eqnarray}
  \sum_{j=1}^n |\wta^{-1}_{i,j}| = \sum_{j=1}^n \wta^{-1}_{i,j}= \sum_{j=1}^n \wtd^{-1}_{i,j} - \frac{2}{\wtm_{11} + \wtm_{12}} \left( \sum_{j=1}^n \wtd^{-1}_{n,j} \right) \left( 4(\wtd_{i,n}+ \wtd^{-1}_{i,1}) - (\wtd^{-1}_{i,n-1} + \wtd^{-1}_{i,2}) \right). \notag
\end{eqnarray}
The following inequality can be derived using the above expression:
\begin{align}
  \| \wtA^{-1} \|_{\infty} &= \max_i \sum_{j = 1}^n | \wta^{-1}_{i.j}| = \max_i \left\{ \sum_{j=1}^n \wtd^{-1}_{i,j} - \frac{2}{\wtm_{11} + \wtm_{12}} \left( \sum_{j=1}^n \wtd^{-1}_{n,j} \right) \left( 4(\wtd_{i,n}+ \wtd^{-1}_{i,1}) - (\wtd^{-1}_{i,n-1} + \wtd^{-1}_{i,2}) \right) \right\} \notag \\
&\le \underbrace{\max_i \sum_{j=1}^n \wtd_{i,j}^{-1}}_{\wtpi_1} + \frac{2}{\wtm_{11} + \wtm_{12}} \underbrace{\left( \sum_{j=1}^n \wtd^{-1}_{n,j} \right)}_{\wtpi_2}  \underbrace{\max_i \wtg(i)}_{\wtpi_3}, \label{eq:invAtd}
\end{align}
where
\begin{eqnarray}
   \wtg(i) = 4 \underbrace{(\wtd_{i,n}^{-1}+ \wtd^{-1}_{i,1})}_{\wttet_1} - \underbrace{(\wtd^{-1}_{i,n-1} + \wtd^{-1}_{i,2})}_{\wttet_2}.
\end{eqnarray}

With $\|B_n^{-1}\|_{\infty} \le (n+1)^2((n+1)^2+8)/384$ (see ~\cite{dow2002explicit}) and Lemma~\ref{lem:invCn}, we have
\begin{align}
   \wtpi_1 = \max_i \sum_{j=1}^n \wtd^{-1}_{i,j} &= \| \wtD^{-1} \|_{\infty} \le \| \wtB^{-1} \|_{\infty} \| C^{-1} \|_{\infty} \le \frac{(n+1)^2 ((n+1)^2 + 8)}{2304}. \notag
\end{align}


Next,
\begin{eqnarray}
   \wtd^{-1}_{n,j} =  \frac{\gamma_1}{\gamma_{n+1}} \left(  \sum_{k = 1}^{j} \gamma_k \wtb^{-1}_{k,j} + \sum_{k = j+1}^n \gamma_k \wtb^{-1}_{k,j} \right) = \frac{\gamma_1}{\gamma_{n+1}} \left(  \sum_{k = 1}^{j} \gamma_k \wtb^{-1}_{j,k} + \sum_{k = j+1}^n \gamma_k \wtb^{-1}_{k,j} \right). \notag
\end{eqnarray}
By using the explicit formula for $\wtb^{-1}_{i.j}$, $i \ge j$ and Lemma~\ref{lem:2}, after tedious calculation we get
\begin{align}
   \wtd^{-1}_{n,j} &=\mu (\nu_3 j^3 + \nu_2 j^2 + \nu_1 j + \nu_0 \gamma_j),
\end{align}
where
\begin{align*}
  \mu &= \frac{\gamma_1} {36 \gamma_{n+1} (n+1)(n^2 + 2n + 3)}, \\
  \nu_0 &= n^3 + 3n^2 + 5n + 3, \\
  \nu_1 &= 2(2n+1)\gamma_{n+1} - (n+1)\gamma_n - (n^3 + 3n^2 + 4n + 2), \\
  \nu_2 &= (4n^2 + 5n -3)\gamma_{n+1} - n(n+2)\gamma_n  + 2n^2 + 4n + 3, \\
  \nu_3 &= -2(2n+1) \gamma_{n+1} + (n+1)\gamma_n - (n+1). 
\end{align*}
Therefore,
\begin{align}
   \wtpi_2 &= \sum_{j=1}^n \wtd^{-1}_{n,j} = \kappa \left(\nu_3 \sum_{j=1}^n j^3 + \nu_2 \sum_{j=1}^n j^2 + \nu_1 \sum_{j=1}^n j + \nu_0 \sum_{j=1}^n \gamma_j \right), \notag \\
   &= \kappa \left(  \nu_3 \frac{n^2(n+1)^2}{4} + \nu_2 \frac{n(n+1)(2n+1)}{6} + \nu_1 \frac{n(n+1)}{2} + \nu_0 \frac{\gamma_{n+1} - \gamma_n -1}{6} \right) \notag \\
  &= \frac{\gamma_1}{36 \gamma_{n+1}} \left( \frac{1}{6}(2n^2 + n +1)(\gamma_{n+1}-1) + \frac{n^2}{2(n^2 + 2n + 3)} - \frac{1}{12}(n^2+2n + 2)\gamma_n \right) \notag \\
&\le \frac{\gamma_1}{36 \gamma_{n+1}} \left( \frac{1}{6}(2n^2 + n +1) \gamma_{n+1} + \frac{n^2}{2(n^2 + 2n + 3)} - \frac{1}{12}(n^2+2n + 2)\gamma_n \right). \notag
\end{align}
With Lemma~\ref{lem:1}(ii) and $\gamma_{n+1} \ge 4 \gamma_n \ge 4^2 \gamma_{n-1} \ge 4^{n+1}$, for $n \ge 2$,
\begin{align}
  \wtpi_2 &\le \frac{1}{36}  \left( \frac{1}{6}(2n^2 + n +1)  + \frac{n^2}{2\gamma_{n+1}(n^2 + 2n + 3)} - \frac{1}{96}(n^2+2n + 2) \right) \notag \\
   &= \frac{1}{36}\left( \frac{31n^2 + 14n + 14}{96} + \frac{n^2}{2\gamma_{n+1}(n^2 + 2n + 3)} \right) \notag \\
   &\le \frac{1}{36} \left( \frac{31n^2 + 14n + 14}{96} + \frac{1}{2 \cdot 4^3} \right) \notag \\
   &\le \frac{1}{3456}(31n^2 + 14n + 15). \notag
\end{align}


We now construct an estimate for $\wtpi_3$. Using~\eqref{eq:dinv}, we have
\begin{equation}
  \begin{aligned}
     \wtd_{i,1}^{-1} &= \frac{\gamma_{n+1-i}}{\gamma_{n+1}} \sum_{k=1}^i \gamma_k \wtb_{k,1}^{-1} +\frac{\gamma_i}{\gamma_{n+1}} \sum_{k=1}^{n-i} \gamma_k \wtb_{n+1-k,1}^{-1},\\
     \wtd_{i,n}^{-1} &= \frac{\gamma_i}{\gamma_{n+1}} \sum_{k=1}^{n+1-i} \gamma_k \wtb^{-1}_{k,1} + \frac{\gamma_{n+1-i}}{\gamma_{n+1}} \sum_{k=1}^{i-1} \gamma_k \wtb_{n+1-k,1}^{-1}, \\
     \wtd_{i,2}^{-1} &= \begin{cases} 
     \displaystyle   \frac{\gamma_1}{\gamma_{n+1}} \sum_{k=1}^n \gamma_k \wtb^{-1}_{k,n-1}, &i = 1, \\
     \displaystyle  \frac{\gamma_{n+1-i}}{\gamma_{n+1}} \sum_{k=1}^i \gamma_k \wtb^{-1}_{k,2} + \frac{\gamma_i}{\gamma_{n+1}} \sum_{k=1}^{n-i} \gamma_k \wtb^{-1}_{n+1-k,2},& 2 \le i \le n,
                                   \end{cases} \notag \\
     \wtd_{i,n-1}^{-1} &= \begin{cases}
     \displaystyle \frac{\gamma_i}{\gamma_{n+1}} \sum_{k=1}^{n+1-i} \gamma_k \wtb^{-1}_{k,2} + \frac{\gamma_{n+1-i}}{\gamma_{n+1}} \sum_{k=1}^{i-1} \gamma_k \wtb^{-1}_{n+1-k,2},& 1 \le i \le n-1, \\    
    \displaystyle \frac{\gamma_1}{\gamma_{n+1}} \sum_{k=1}^n \gamma_k \wtb^{-1}_{k,n-1},& i = n.
    \end{cases}
  \end{aligned}
\end{equation}
Hence
\begin{align}
      \wttet_1  &= \frac{\gamma_i}{\gamma_{n+1}} \sum_{k=1}^{n-i} \gamma_k \mathcal{B}_1(k)  + \frac{\gamma_i \gamma_{n+1-i} }{\gamma_{n+1}} (\wtb^{-1}_{i,1} + \wtb^{-1}_{n+1-i,1} )+ \frac{\gamma_{n+1-i}}{\gamma_{n+1}} \sum_{k=1}^{i-1} \left\{ \gamma_k \mathcal{B}_1(k) \right\},  \notag \\
  \wttet_2  &= \frac{\gamma_i}{\gamma_{n+1}} \sum_{k=1}^{n-i} \left\{ \gamma_k \mathcal{B}_2(k) \right\} + \frac{\gamma_i \gamma_{n+1-i} }{\gamma_{n+1}} (\wtb^{-1}_{i,2} + \wtb^{-1}_{n+1-i,2} )+ \frac{\gamma_{n+1-i}}{\gamma_{n+1}} \sum_{k=1}^{i-1} \left\{ \gamma_k \mathcal{B}_2(k), \right\}. \notag
\end{align}
where, for $k = 1,\dots, n$,
\begin{align*}
  \mathcal{B}_1(k) &= \wtb^{-1}_{k,1} + \wtb^{-1}_{n+1-k,1} = \frac{-k^2 + (n+1)k}{2(n+1)}, \\
  \mathcal{B}_2(k) &= \wtb^{-1}_{k,2} + \wtb^{-1}_{n+1-k,2} = \frac{-2k^2 + 2(n+1)k - (n+1)}{n+1}, 
\end{align*}
after direct calculations of each $\wtb$ using~\eqref{form:btilde}. Furthermore, $4\mathcal{B}_1(k) - \mathcal{B}_2(k) = 1$.

We shall now use the above intermediate results and Lemmas~\ref{lem:2} to derive an expression for $\wtg$:
\begin{align}
\wtg(i) &= 4\wttet_1(i) - \wttet_2(i) \notag \\
         &= \frac{\gamma_i}{\gamma_{n+1}} \sum_{k=1}^{n-i} \left\{ \gamma_k (4 \mathcal{B}_1(k) - \mathcal{B}_2(k)) \right\} + \frac{\gamma_i \gamma_{n+1-i} }{\gamma_{n+1}} (4 \mathcal{B}_1(k) - \mathcal{B}_2(k)) + \frac{\gamma_{n+1-i}}{\gamma_{n+1}} \sum_{k=1}^{i-1} \left\{ \gamma_k (4 \mathcal{B}_1(k) - \mathcal{B}_2(k)) \right\} \notag \\
        &=  \frac{\gamma_i}{\gamma_{n+1}} \sum_{k=1}^{n-i} \gamma_k  + \frac{\gamma_i \gamma_{n+1-i} }{\gamma_{n+1}}  + \frac{\gamma_{n+1-i}}{\gamma_{n+1}} \sum_{k=1}^{i-1}  \gamma_k \notag \\
      &= \frac{4}{3} \frac{\gamma_i \gamma_{n+1-i}}{\gamma_{n+1}} - \frac{1}{6} \frac{\gamma_i \gamma_{n-i}}{\gamma_{n+1}} - \frac{1}{6} \frac{\gamma_{i-1}\gamma_{n+1-i}}{\gamma_{n+1}} - \frac{\gamma_i + \gamma_{n+i-1}}{6 \gamma_{n+1}}.
\end{align}
Considering $i \in [1,n] \subset \mathbb{R}$ and with
\begin{align}
   &(\gamma_i \gamma_{n+1-i})' = \frac{1}{2\sqrt{15}} \gamma_{n+1-2i} \ln \left( \frac{r_1}{r_2} \right), \notag \\
   &(\gamma_i \gamma_{n-i})' = \frac{1}{2\sqrt{15}} \gamma_{n-2i} \ln \left( \frac{r_1}{r_2} \right), \notag \\
   &(\gamma_{i-1} \gamma_{n+1-i})' = \frac{1}{2\sqrt{15}} \gamma_{n-2i+2} \ln \left( \frac{r_1}{r_2} \right), \notag
\end{align}
we have
\begin{align}
  \wtg'(i) &= \frac{4}{3} \left( \frac{\gamma_i \gamma_{n+1-i}}{\gamma_{n+1}} \right)'- \frac{1}{6} \left( \frac{\gamma_i \gamma_{n-i}}{\gamma_{n+1}} \right)' - \frac{1}{6} \left( \frac{\gamma_{i-1}\gamma_{n+1-i}}{\gamma_{n+1}} \right)'- \left( \frac{\gamma_i + \gamma_{i+1}}{6 \gamma_{n+1}} \right)' \notag \\
         &= -\frac{r_1^i(1-r_1^{n+1-2i})\ln r_1 - r_2^i(1-r_2^{n+1-2i}) \ln r_2}{12 \sqrt{15}\gamma_{n+1}} \notag \\
         &=  \frac{\ln r_1(\alpha_{n+1-i} - \alpha_{i})}{6\sqrt{15}\gamma_{n+1}}. \notag 
\end{align}
The critical point is $i = (n+1)/2$, which is also the maximum of $\wtg(i)$. Therefore,
\begin{align}
   \wtpi_3 &\le \wtK((n+1)/2) = \frac{4}{3} \frac{\gamma_{\frac{n+1}{2}}^2}{\gamma_{n+1}} - \frac{1}{3} \frac{\gamma_{\frac{n+1}{2}} \gamma_{\frac{n-1}{2}}}{\gamma_{n+1}} - \frac{1}{3} \frac{\gamma_{\frac{n+1}{2}}}{\gamma_{n+1}} \notag \\
&\le \frac{4}{3} \frac{\gamma_{\frac{n+1}{2}}^2}{\gamma_{n+1}} - \frac{1}{24} \frac{\gamma_{\frac{n+1}{2}}^2}{\gamma_{n+1}} =  \frac{31\gamma_{\frac{n+1}{2}}^2}{24\gamma_{n+1}}  =  \frac{31}{48\sqrt{15}} \frac{(r_1^{\frac{n+1}{2}} - r_2^{\frac{n+1}{2}})^2}{r_1^{n+1} - r_2^{n+1}} \notag \\
   &= \frac{31}{48\sqrt{15}} \frac{1- 1/r_1^{n+1}}{1 + 1/r^{n+1}} \le \frac{31}{48\sqrt{15}}. \notag
\end{align}

\begin{theorem}\label{thm:norminv}
For the matrix~\eqref{form:An}, with $a_0 = 68$ and $a_1 = -40$, the following inequality holds for $p \in \{1,2,\infty\}$:
$$
   \| \wtA^{-1}_n \|_p \le \frac{(n+1)^2((n+1)^2 + 14)}{2304}.
$$
\end{theorem}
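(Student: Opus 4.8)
The plan is to reduce to $p=\infty$ and then to collect the estimates on $\wtpi_1,\wtpi_2,\wtpi_3$ already derived in this section, together with a lower bound on $\wtm_{11}+\wtm_{12}$ that is sharper than the bare positivity coming from Lemma~\ref{lem:mtspd}. First, using the symmetry of $\wtA_n$ I would observe that $\|\wtA_n^{-1}\|_1=\|\wtA_n^{-1}\|_\infty$ and $\|\wtA_n^{-1}\|_2\le\sqrt{\|\wtA_n^{-1}\|_1\,\|\wtA_n^{-1}\|_\infty}=\|\wtA_n^{-1}\|_\infty$, exactly as in the proof of Theorem~\ref{theo:1}, so it suffices to bound $\|\wtA_n^{-1}\|_\infty$. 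By~\eqref{eq:invAtd}, $\|\wtA_n^{-1}\|_\infty\le\wtpi_1+\frac{2}{\wtm_{11}+\wtm_{12}}\wtpi_2\wtpi_3$, and the bounds obtained above give $\wtpi_1\le(n+1)^2((n+1)^2+8)/2304$, $\wtpi_2\le(31n^2+14n+15)/3456$, and $\wtpi_3\le31/(48\sqrt{15})$.

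The one missing ingredient is a good lower bound on $\wtm_{11}+\wtm_{12}$; the crude estimate $\wtm_{11}+\wtm_{12}>1$ from Lemma~\ref{lem:mtspd} is not enough. I would add the two closed-form expressions for $\wtm_{11}$ and $\wtm_{12}$ displayed before Lemma~\ref{lem:mtspd}: the coefficients of $\gamma_{n+1}$, of $\gamma_n$, and the constant term all carry the common factor $3(n+1)(n^2+2n+3)$, leaving the identity
\[
  \wtm_{11}+\wtm_{12}=1+\frac{\gamma_{n+1}-\gamma_n-1}{3\gamma_{n+1}}=\frac{4}{3}-\frac{\gamma_n+1}{3\gamma_{n+1}}.
\]
Since $\gamma_{n+1}\ge4\gamma_n$ by Lemma~\ref{lem:1}(ii) and $\gamma_n\ge\gamma_1=1$, this yields $\frac{\gamma_n+1}{3\gamma_{n+1}}\le\frac{\gamma_n+1}{12\gamma_n}\le\frac16$, hence $\wtm_{11}+\wtm_{12}\ge\frac76$ and $\frac{2}{\wtm_{11}+\wtm_{12}}\le\frac{12}{7}$.

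Finally I would substitute the four bounds into~\eqref{eq:invAtd}. Since the target bound exceeds the $\wtpi_1$-bound by exactly $6(n+1)^2/2304=(n+1)^2/384$, what remains is to check $\frac{12}{7}\cdot\frac{31n^2+14n+15}{3456}\cdot\frac{31}{48\sqrt{15}}\le\frac{(n+1)^2}{384}$; clearing denominators, this is the single quadratic inequality $31(31n^2+14n+15)\le252\sqrt{15}\,(n+1)^2$, which holds for every $n\ge1$ because $252\sqrt{15}>961=31^2$ (indeed $15\cdot252^2>961^2$) and the coefficients of $n$ and the constant term on the right also dominate those on the left. I expect the delicate point to be exactly this final inequality: the constants are tight --- for instance $\wtpi_3\le31/(48\sqrt{15})\approx0.167$ against an available coefficient $\approx0.169$ --- so one must keep $\sqrt{15}$ in exact form rather than rounding it to a convenient rational, and one genuinely needs the bound $\wtm_{11}+\wtm_{12}\ge\frac76$ and not merely $\wtm_{11}+\wtm_{12}>1$.
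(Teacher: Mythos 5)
Your proposal is correct and follows essentially the same route as the paper: reduce to $p=\infty$ by symmetry, insert the already-derived bounds for $\wtpi_1$, $\wtpi_2$, $\wtpi_3$ into~\eqref{eq:invAtd}, and bound $\wtm_{11}+\wtm_{12}$ from below using the closed-form expressions and Lemma~\ref{lem:1}(ii). The only deviation is that your identity $\wtm_{11}+\wtm_{12}=\frac{4}{3}-\frac{\gamma_n+1}{3\gamma_{n+1}}$ gives the constant $\frac{7}{6}$ where the paper obtains $\frac{5}{4}$, and your coefficient-wise verification of $31(31n^2+14n+15)\le 252\sqrt{15}\,(n+1)^2$ confirms that the resulting factor $\frac{12}{7}$ still closes the final estimate.
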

\begin{proof}
Notice that
\begin{align}
   \wtm_{11} + \wtm_{12} &= 1 + \frac{3(n^3 + 3n^2 + 15n + 9)\gamma_{n+1} - 3(n^3 + 3n^2 + 5n + 3)\gamma_n + 3(n+1)(n^2+2n+3)}{9 \gamma_{n+1}(n+1)(n^2 + 2n + 3)} \notag \\
   &= 1 + \frac{1}{3} + \frac{10n+6}{3(n+1)(n^2+2n+3)}- \frac{1}{3} \frac{\gamma_n}{\gamma_{n+1}} + \frac{1}{3\gamma_{n+1}} \notag \\
   &\ge \frac{4}{3} - \frac{1}{3} \frac{\gamma_n}{\gamma_{n+1}}
   \ge  \frac{4}{3} - \frac{1}{3} \times \frac{1}{4} = \frac{5}{4}, \notag
\end{align}
after applying Lemma~\ref{lem:1}(ii). By using $\wtpi_1$, $\wtpi_2$, $\wtpi_3$ as given above, the bound~\eqref{eq:invAtd} reads, for $n \ge 1$,
\begin{align}
    \|\wtA^{-1}\|_{\infty} &\le  \frac{1}{2304}((n+1)^2 ((n+1)^2 + 8) + \frac{31(31n^2 + 14n +15)}{103680\sqrt{15}} \notag \\
            &\le  \frac{(n+1)^2((n+1)^2 + 8)}{2304} + \frac{6(n+1)^2}{2304} = \frac{(n+1)^2((n+1)^2 + 14)}{2304}. \notag
\end{align}
\end{proof}

\section{Numerical examples} \label{sec:numer}

We have computed the norms of exact inverses of $A_n$ and $\wtA$ for various size $n$ and the proposed upper bounds from Theorem~\ref{theo:1} and~\ref{thm:norminv}. The computational results are presented in Figure~\ref{fig:norminv1} various matrix size $n$, which suggests a good estimate provided by the theorem. 
\begin{figure}[H]
	\centering
	\begin{subfigure}[b]{0.4\textwidth}
    \centering
	\includegraphics[width=1.0\linewidth]{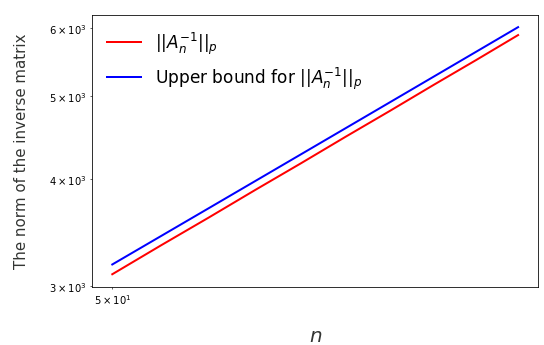}
	\caption{$A_n^{-1}$ (Toeplitz case) norm and the upper bound computations}
	\end{subfigure}
	\hfill
	\begin{subfigure}[b]{0.4\textwidth}
    \centering
      \includegraphics[width=1.0\linewidth]{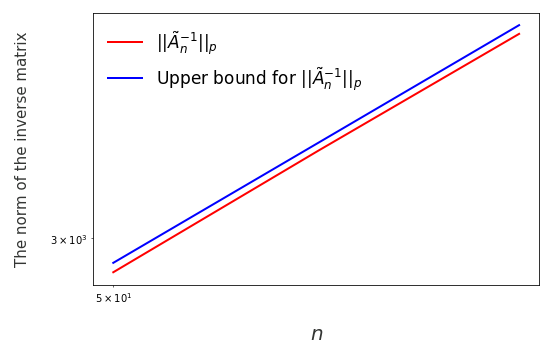}
      \caption{$\wtA_n^{-1}$ (nearly Toeplitz case) norm and upper bound computations}
      \end{subfigure}
	\caption{Evaluation of norm of inverse of matrices and bound in the log scale.}
	\label{fig:norminv1}
\end{figure}

\section{Conclusions} \label{sec:conclusion}


In this paper, we derived the explicit formula of the inverse of seven-diagonal matrix and give upper bounds for its norms in terms of $n$. Findings have a great potential for other applications such as numerical analysis. Numerical verification was provided. The next stage of our research will be exploring more complicated matrices for the biharmonic problems with different boundary conditions and inverse properties of sevendiagonal near Toeplitz matrices with general perturbed corners. 

\section{Acknowledgement}

BK and YA wishes to acknowledge the research grant, No AP08052762, from the Ministry of Education and the Nazarbayev University Faculty Development Competitive Research Grant (NUFDCRG), Grant No 110119FD4502.

\bibliographystyle{plain}
\bibliography{reference}  
\end{document}